\newtheorem{thm}{Theorem}[section]
\newtheorem{lem}[thm]{Lemma}
\newtheorem{Defi}[thm]{Definition}
\newtheorem{rk}[thm]{Remark}
\DeclarePairedDelimiter{\Vector}{\lparen}{\rparen}
\theoremstyle{definition}
\theoremstyle{remark}
\numberwithin{equation}{section}
\begin{document}


\title[On the vanishing viscosity limit for 3D axisymmetric flows]{On the vanishing viscosity limit for 3D axisymmetric flows without swirl}


\author{Patrick Brkic}
\address{Institut für Angewandte Analysis, Universit\"at Ulm}
\email{patrick.brkic@uni-ulm.de}

\author{Emil Wiedemann}
\address{Department of Mathematics, Friedrich-Alexander-Universit\"at Erlangen-N\"urnberg}
\email{emil.wiedemann@fau.de}

\begin{abstract}
We study the vanishing viscosity limit for the three-dimensional incompressible Navier-Stokes equations in terms of the relative vorticity in the setting of axisymmetric velocity fields without swirl. We show that the weak convergence of relative vorticity to a renormalized solution of the Euler equations, established by Nobili and Seis, can be upgraded to strong convergence.
\end{abstract}

\maketitle

\section{Introduction}
The evolution of an incompressible viscous flow can be described by the $3D$ Navier-Stokes equations 
\begin{align}\label{forced NavierStokes 3D}
\begin{cases}
\partial_t u^{\nu}+u^{\nu}\cdot \nabla_x u^{\nu}+ \nabla_x p^{\nu}=\nu\Delta u^\nu+f^{\nu}\quad \text{ in } (0,T)\times \mathbb{R}^3,\\
\nabla_x\cdot u^{\nu}=0,\\
u^{\nu}(0,\cdot)=u^{\nu}_0,
\end{cases}
\end{align}
where for given viscosity $\nu >0$, at a point $(t,x)$ in space-time $u^{\nu}=u^{\nu}(t,x)\in \mathbb{R}^3$ represents the velocity field, $p^{\nu}=p^{\nu}(t,x)\in\mathbb{R}$ is the pressure, $f^{\nu}=f^{\nu}(t,x)\in \mathbb{R}^3$ is an external force, and $u^{\nu}_0(x)$ is the initial velocity. Since the pioneering work of Leray \cite{Leray} in 1934, in which he proved global existence for a class of weak solutions for (\ref{forced NavierStokes 3D}), uniqueness in this Leray-Hopf class has remained an open issue for general external forces, although very recently non-uniqueness has been demonstrated for a specific force~\cite{Albritton} (see also~\cite{Lady_enskaja_1969, Buckmaster, Guillod, Jia1, Jia2} for indications of non-uniqueness of Leray-Hopf solutions).

In the presence of certain symmetries, the situation is much better. A particularly important case is that of {\em axisymmetry without swirl}, described in detail in Section~\ref{axisec} below. The idea is that this symmetry reduces the number of degrees of freedom in the domain to two (the radial and the vertical, but not the angular coordinate), so that the axisymmetric $3D$ equations should behave similarly as the $2D$ equations. 

Accordingly, global existence and uniqueness of weak solutions for~\eqref{forced NavierStokes 3D} were proved in the setting of axisymmetric initial data without swirl~\cite{UKHOVSKII196852} for initial velocities in $L^2$, initial vorticities in $L^2\cap L^{\infty}$, initial relative vorticities in $L^2\cap L^{\infty}$ and forces in $L^1_t H^1_x$.
Around the same time, Ladyzhenskaya gave an independent proof in \cite{Lady_enskaja_1969}. A few decades later, Leonardi et al.\ gave a refined proof \cite{Leonardi2013OnAS} for initial velocities in $H^2$ and external forces in $L^2_tH^1_x$. This result was then upgraded \cite{ABIDI2008592} to initial data in $H^{\frac{1}{2}}$ and external forces in $L^2_{loc,t}H^{\lambda}_x$ with $\lambda > \frac{1}{4}$.

In two dimensions, the theory for the Navier-Stokes equations is rather complete. For instance, weak solutions are known to exist globally, to be unique and even regular. We refer to \cite{constantin1988navier, ladyzhenskaya1963mathematical, alma991004374059705596, Temam, foias1979some, Temam1979NavierStokesET} for an overview of classical results in two dimensions. 
As mentioned, the axisymmetric $3D$ equations can be viewed as related to the $2D$ case, and indeed one can write out~\eqref{forced NavierStokes 3D} in cylindrical coordinates as to arrive at a formulation which resembles the $2D$ Navier-Stokes system. More precisely, this means that~\eqref{forced NavierStokes 3D} is studied in the half-plane $\mathbb{H}$ instead of the whole $\mathbb{R}^3$, cf.~\eqref{initial value problem for forced axisymmetric 3D Navier Stokes} below. 
On that account, the well-developed theory of the Navier-Stokes equation in two dimensions is helpful for the study of the $3D$ axisymmetric situation, which is not to say that the two-dimensional theory simply transfers in every respect. 
In particular, genuinely three-dimensional effects such as vortex-stretching appear. This is for instance the case in the {\em vorticity formulation} of~\eqref{forced NavierStokes 3D}, where the velocity field can be reconstructed from the vorticity $\omega^\nu:=\operatorname{curl}u^\nu$ by virtue of the {\em Biot-Savart law}
\begin{align*}
u^{\nu}(t,x)=\int_{\mathbb{R}^3}K(x-y)\times\omega^{\nu}(t,y)dy
\end{align*}
with $K(t,x)=\frac{1}{4\pi}\frac{x}{|x|^3}$.
The vorticity formulation of the Cauchy problem for the $3D$ axisymmetric Navier-Stokes equations without swirl, written in cylindrical coordinates, then becomes
\begin{align}\label{vorticity formulation NSE}
\begin{cases}
\partial_t \omega^{\nu}+\nabla \cdot \left(u^{\nu}\omega^{\nu}\right)= \nu \left(\Delta \omega^{\nu}+ \frac{1}{r}\partial_r \omega^{\nu}- \frac{1}{r^2}\omega^{\nu}\right)\text{ in } (0,T)\times \mathbb{H},\\
u^{\nu}=G\ast \omega^{\nu},\\
\omega^{\nu}(0,\cdot)=\omega^\nu_{0},
\end{cases}
\end{align}
where the spatial differential operators are taken with respect to the cylindrical variables $(r,z)\in\mathbb{H}$ and $G$ is the kernel of the $3D$ Biot-Savart law restricted to the axisymmetric swirl-free setting. Also, by abuse of notation we wrote $\omega^\nu$ for the angular component of vorticity (the radial and vertical components are both zero). This system was recently studied in \cite{sverakgallay} with homogeneous Dirichlet boundary condition. There, the authors established global well-posedness of mild solutions for initial vorticities in $L^1$. Moreover, they studied the $3D$ Biot-Savart law in the setting of axisymmetry without swirl, which in particular exhibits similarities to the Biot-Savart law in two dimensions~\cite[Section 2]{sverakgallay}. 

In this present paper, we study the important problem of the {\em vanishing viscosity limit}. The question is whether, and in what sense, the solutions of~\eqref{vorticity formulation NSE} converge as the viscosity $\nu\searrow0$. Formally, of course, one expects the limit to be a solution to the incompressible Euler equations (to be discussed in a moment). In the axisymmetric setting without swirl, when the {\em relative vorticity} $\xi^{\nu}=\frac{\omega^{\nu}}{r}$ is initially in $L^1\cap L^p(\mathbb{R}^3)$ for some $1<p<\infty$, this is true, and our main result says that the $\xi^\nu$ will {\em strongly} converge in $L^p$, uniformly in time. The rigorous formulation of our result is given below in Theorem~\ref{main theorem}.

In the inviscid framework, the evolution of an incompressible fluid is described by the $3D$ Euler equations 
\begin{align}\label{forced 3D Euler}
\begin{cases}
\partial_t u+ u\cdot \nabla_x u +\nabla_x p=f \quad\text{ in } (0,T)\times \mathbb{R}^3,\\
\nabla_x \cdot u=0,\\
u(0,\cdot)=u_0,
\end{cases}
\end{align}
where the variables have the same meaning as before. Global well-posedness of smooth solutions is still unresolved, while on the positive side local-well posedness results are classically known~\cite[Section 2.5]{MP94}. In the axisymmetric no-swirl setting, global well-posedness was settled in \cite{UKHOVSKII196852} under the assumptions $\omega_0, \frac{\omega_0}{r}\in L^2\cap L^{\infty}$ and $f\in L^1_tH^1_x$. This result was improved \cite{Danchin_2007} to $L^{\infty}$ initial vorticities lying in the Lorentz space $L^{3,1}$ and relative vorticities in $L^{3,1}$. Moreover, for smooth initial data this was carried out in \cite{Raymond, Majda} under some additional assumptions on the initial vorticity. For a corresponding result in certain Besov spaces we refer to  \cite{Abidi}.

As mentioned, the small viscosity behaviour is a very important concern in this context. In the axisymmetric setting and for zero right-hand sides, results for the nonzero swirl case can be found for instance in \cite{HmidiZerguine,Sulaiman}.
In the case of zero swirl, the small viscosity behaviour for the axisymmetric Navier-Stokes equations was investigated in~\cite{JiuQuansen,nobili2019renormalization} within the relative vorticity formulation ($\xi^{\nu}=\frac{\omega^{\nu}}{r}$)  
\begin{align}\label{relative vorticity formulation NSE}
\begin{cases}
\partial_t \xi^{\nu}+u^{\nu}\cdot \nabla \xi^{\nu} = \nu \left(\Delta \xi^{\nu}+\frac{3}{r}\partial_r \xi^{\nu}\right) \quad\text{ in } (0,T)\times \mathbb{H}, \\
u^{\nu}=G\ast \omega^{\nu},\\
\xi^{\nu}|_{t=0}=\xi^{\nu}_0
\end{cases}
\end{align}
with homogeneous Neumann boundary condition $\partial_r\xi^{\nu}=0$ on $\partial\mathbb{H}$. 
The transport estimate 
\begin{align*}
\|\xi^{\nu}\|_{L^{\infty}((0,T),L^p(\mathbb{R}^3))}\leq \|\xi^{\nu}_0\|_{L^p(\mathbb{R}^3)}
\end{align*}
was established and used to demonstrate that in the inviscid limit the relative vorticities converge weakly$^{*}$ in $L^{\infty}_tL^p_x$, for $1<p<\infty$, to a renormalized solution of the associated axisymmetric Euler equation without swirl  
\begin{align}\label{relative vorticity formulation Euler}
\begin{cases}
\partial_t \xi + u\cdot \nabla \xi = 0 \quad\text{ in } (0,T)\times \mathbb{H},\\
u=G\ast \omega,\\
\xi|_{t=0}=\xi_0,\\
\partial_r \xi=0
\end{cases}
\end{align}
~\cite[Theorem 1.2]{nobili2019renormalization}, where renormalization is understood in the sense of DiPerna-Lions \cite{dipernalions}.

In the two dimensional framework two recent works \cite{Ciampa2021,NussenzveigLopes2021} highlighted that the established weak$^{*}$ convergence of  the vorticity can be upgraded to strong convergence in $C_tL^p_x$, for $1<p<\infty$. While in \cite{Ciampa2021} the authors established the convergence on the torus and on the whole space with some convergence rates in the torus case, in \cite{NussenzveigLopes2021} the convergence was shown on the torus with possible forcing term. The main goal of this paper is to establish strong convergence of $\xi^{\nu}$ in $C_tL^p_x$, for $1<p<\infty$, in the axisymmetric case without swirl, including a forcing term. 

Now, let us highlight the main difficulties compared to \cite{Ciampa2021} and \cite{NussenzveigLopes2021}. Due to the axisymmetry and without swirl-free property,~\eqref{relative vorticity formulation NSE} and~\eqref{relative vorticity formulation Euler} are stated in two dimensions whereas we want to show convergence in three dimensions. Technical difficulties arise in adapting the proof for the two dimensional setting in~\cite{Ciampa2021} since it is not possible to derive a bound for the velocity fields $u^{\nu}$ in $L^1+L^{\infty}(\mathbb{R}^3)$ which was crucial in the proof of Ciampa et al. To overcome this difficulty, we have to use a suitable cut-off function in a way that the scaling arising from cylindrical coordinates can be controlled properly. Moreover, in the sense of the axisymmetric setting, our result can also be seen as an extension of~\cite{Ciampa2021} to external forces. Ultimately, let us note that we cannot use directly the arguments in~\cite{NussenzveigLopes2021} to prove strong convergence, since there the compactness of the torus is exploited.\\
We conclude this introduction with some comments on the nonzero swirl case. Opposed to the zero swirl setting, the situation is different in that global-well posedness is still open for the Euler equations. However, for the Navier-Stokes equations positive answers were for instance given for small angular initial velocity in $L^3$ \cite{ZhangFang} and for smallness of $\frac{u_0^{\nu}}{\sqrt{r}}$ in $L^4$ \cite{Zhang}. Furthermore in~\cite{NeustupaPokorny}, conditions on the radial and angular components of the velocity were derived for weak solutions to become strong solutions. See also~\cite{Gallagher} for a weak-strong uniqueness result.
\section{The axisymmetric swirl-free setting}\label{axisec}
\begin{Defi}
Let $u=u(r,\theta,z)$ be a vector field in cylindrical coordinates $(r,\theta,z)$ and 
\begin{align*}
e_r=\Vector{\cos(\theta),\sin(\theta),0}, \quad e_{\theta}=\Vector{-\sin(\theta),\cos(\theta),0},\quad e_z=\Vector{0,0,1}
\end{align*}
be the unit vectors in cylindrical coordinates.
\begin{enumerate}
\item[(i)] $u$ is called {\em axisymmetric} if $u$ has cylindrical symmetry in space, i.e., $u=u(r,z)$.
\item[(ii)] $u$ is called {\em swirl-free} if its angular component vanishes, i.e., $u_{\theta}=u\cdot e_\theta=0$.
\end{enumerate}
\end{Defi}
In the first step we want to reformulate~\eqref{forced NavierStokes 3D} in the framework of axisymmetry without swirl. If we assume that the swirl component of the velocity vanishes, we can pass to cylindrical coordinates and observe that the advection term may be rewritten as
\begin{align}\label{nabla in cylindrical coordinates}
u^{\nu}\cdot \nabla_x u^{\nu}=u^{\nu}\cdot \nabla u^{\nu},
\end{align}
where $\nabla_x=\Vector{\partial_x,\partial_y, \partial_z}$ is the gradient in Cartesian variables $x,y,z$, and $\nabla =\Vector{\partial_r,\partial_z}$ is the gradient in cylindrical variables $r,z$.
Moreover, if we include cylindrical symmetry in space, the Cartesian Laplacian becomes 
\begin{align}\label{laplace in cylindrical coordinates}
\Delta_x=\partial_r^2+\frac{1}{r}\partial_r +\partial_z^2\eqqcolon \Delta +\frac{1}{r}\partial_r
\end{align}
in cylindrical coordinates. Now, for $u^{\nu}, f^{\nu}$  axisymmetric and without swirl we consider the associated initial boundary value problem to (\ref{forced NavierStokes 3D}) 
\begin{align}\label{initial value problem for forced axisymmetric 3D Navier Stokes}
\begin{cases}
\partial_t u^{\nu}+ u^{\nu}\cdot \nabla u^{\nu}+\nabla p^{\nu}=\nu (\Delta + \frac{\partial_r}{r}) u^{\nu} + f^{\nu} &\text{  in } (0,T)\times \mathbb{H},\\
u^{\nu}_r=0 &\text{ on } \partial \mathbb{H},\\
u^{\nu}(0,\cdot)=u_0^{\nu} &\text{ in } \mathbb{H},\\
\partial_r (ru_r^{\nu})+\partial_z( ru_z^{\nu})=0,
\end{cases}
\end{align}
where $\mathbb{H}=\{(r,z)\in \mathbb{R}^2:~r>0, z\in \mathbb{R}\}$ and $u^{\nu}$ is interpreted as $ u^{\nu}=(u^{\nu}_r,u^{\nu}_z)$.
As a consequence of the cylindrical symmetries, the vorticity $\omega^{\nu}$ is only toroidal and becomes $(\partial_z u_r^{\nu}-\partial_r u_z^{\nu})e_{\theta}$. In this context, a short computation reveals that the relative vorticity $\xi^{\nu}=\frac{\omega^{\nu}}{r}$ fulfils the following advection-diffusion equation:
\begin{align}\label{initial value problem for forced axisymmetric 3D Navier Stokes in relative vorticity formulation}
\begin{cases}
\partial_t \xi^{\nu}+u^{\nu}\cdot \nabla \xi^{\nu} = \nu \left(\Delta \xi^{\nu}+\frac{3}{r}\partial_r \xi^{\nu}\right) + g^{\nu} &\text{ in }(0,T)\times \mathbb{H},\\
\partial_r \xi^{\nu}= 0 &\text{ on } \partial \mathbb{H},\\
\xi^{\nu}|_{t=0}=\xi^{\nu}_0 &\text{ in } \mathbb{H},\\
u^{\nu}=G\ast \omega^{\nu}, f^{\nu}=G\ast \tilde{g}^{\nu},\\
\end{cases}
\end{align}
where $\tilde{g}^{\nu}(r,z)=r g^{\nu}(r,z)=r\operatorname{curl}f^\nu\cdot e_\theta$. Analogously, we consider the following initial boundary value problem for the Euler equations~\eqref{forced 3D Euler}:
\begin{align}\label{initial value problem for forced axisymmetric 3D Euler}
\begin{cases}
\partial_t u+ u\cdot \nabla u+\nabla p=f &\text{ in } (0,T)\times \mathbb{H},\\
u_r=0 &\text{ on } \partial \mathbb{H},\\
u(0,\cdot)=u_0 &\text{ in } \mathbb{H},\\
\partial_r(ru_r)+\partial_z (ru_z)=0.
\end{cases}
\end{align}
In the inviscid limit of~\eqref{initial value problem for forced axisymmetric 3D Navier Stokes in relative vorticity formulation}, the associated transport equation for the relative vorticity then becomes 
\begin{align}\label{initial value problem for forced axisymmetric 3D Euler in relative vorticity formulation}
\begin{cases}
\partial_t \xi+u\cdot \nabla \xi = g &\text{ in }(0,T)\times \mathbb{H}, \\
\partial_r \xi=0 &\text{ on } \partial\mathbb{H},\\
\xi^{\nu}|_{t=0}=\xi^{\nu}_0 &\text{ in } \mathbb{H},\\
u=G\ast \omega, f=G\ast \tilde{g},
\end{cases}
\end{align}
and $\tilde{g}(r,z)=r g(r,z)=r\operatorname{curl}f\cdot e_\theta$.
\begin{rk}
In our arguments it will sometimes be more convenient to work in the two-dimensional setting, i.e., in $\mathbb{H}$, and sometimes in the three-dimensional setting. By abuse of notation, we will denote a function the same no matter whether its arguments are Cartesian or cylindrical coordinates, i.e., we write for instance $F(x)=F(r,z)$.

 We will use~\eqref{nabla in cylindrical coordinates} and~\eqref{laplace in cylindrical coordinates} to switch between the two-and three-dimensional settings. Moreover, for axisymmetric functions $F=F(r,z)$ we will frequently use the integral transformation
\begin{equation*}
\int_{\mathbb{R}^3}F(x)dx=2\pi\int_{\mathbb{H}}F(r,z)rdrdz.
\end{equation*} 
\end{rk}
\begin{Defi}
Let $T>0$ and $p,q\in (1,\infty)$ be given with $\frac{1}{p}+\frac{1}{q}=1$. Let $\xi_0\in L^p_{loc}(\mathbb{R}^3)$, $g\in L^1((0,T), L^p(\mathbb{R}^3))$ and $u=G\ast \omega \in L^1((0,T),L^q_{loc}(\mathbb{R}^3)^3)$ be axisymmetric such that $\nabla_x\cdot u=0$. Then $\xi\in L^{\infty}((0,T),L^p_{loc}(\mathbb{R}^3))$ is called a {\em distributional solution} to the Euler equations (\ref{relative vorticity formulation Euler}) with initial datum $\xi_0$ and right-hand-side $g$ if $\xi$ is axisymmetric and
\begin{align*}
\int_0^T\int_{\mathbb{R}^3} \xi(\partial_t \varphi + u\cdot \nabla \varphi) dx dt + \int_{\mathbb{R}^3}\xi_0 \varphi(t=0)dx=\int_0^T\int_{\mathbb{R}^3} g \varphi dx dt
\end{align*}
for all $\varphi\in C_c^{\infty}([0,T)\times \mathbb{H})$.
\end{Defi}
\begin{rk}
The advection term is not well-defined for all $p\in (1,\infty)$. Note that $u\xi$ is locally integrable in $\mathbb{R}^3$ whenever $u\omega$ is locally integrable in $\mathbb{H}$. In accordance with the two-dimensional Sobolev embedding \cite[Proposition 2.3]{sverakgallay}
\begin{align*}
\|u\|_{L^{\frac{2p}{2-p}}(\mathbb{H})}\lesssim \|\omega\|_{L^p(\mathbb{H})},
\end{align*}
this will be the case if $p\geq \frac{4}{3}$.
For smaller exponents we introduce the notion of renormalized solutions in the sense of DiPerna-Lions \cite{dipernalions}.
\end{rk}
\begin{Defi}
Let $T>0$ and $\xi_0\in L^1(\mathbb{R}^3)$. Further let $g\in L^1((0,T),L^p(\mathbb{R}))$ and $u\in L^1((0,T),L^1_{loc}(\mathbb{R}^3)^3)$ be axisymmetric such that $\nabla_x\cdot u=0$. Then $\xi\in L^{\infty}((0,T),L^1(\mathbb{R}^3))$ is called a {\em renormalized solution} of the Euler equations~\eqref{relative vorticity formulation Euler} with initial datum $\xi_0$ and right-hand-side $g$ if $\xi$ is axisymmetric, $\xi=\xi(t,r,z)$, and
\begin{align*}
\int_0^T\int_{\mathbb{R}^3}\beta(\xi)(\partial_t \varphi+ u\cdot \nabla_x \varphi) dx dt+ \int_{\mathbb{R}^3} \beta(\xi_0) \varphi(t=0) dx= \int_0^T \int_{\mathbb{R}^3} \beta'(\xi) g\varphi  dx dt
\end{align*}
for all $\varphi\in C^\infty_c([0,T)\times \mathbb{R}^3)$ and every bounded $\beta\in C^1(\mathbb{R}^3)$ that vanishes near zero and has bounded first derivative.
\end{Defi}
\begin{lem}\label{energy estimate}
Let $\nu>0$ be a given viscosity and $1\leq p< \infty$. Let also $\xi_0^{\nu}\in L^p(\mathbb{R}^3)$ and $g^{\nu}\in L^1((0,T),L^p(\mathbb{R}^3))$. If $\xi^{\nu}$ is the solution of~\eqref{initial value problem for forced axisymmetric 3D Navier Stokes in relative vorticity formulation}, then it satisfies the energy estimate
\begin{align}\label{energy estimate for Navier Stokes in relative vorticity formulation}
{\|\xi^{\nu}(t)\|_{L^p(\mathbb{R}^3)}^p}+\nu  \int_0^t\int_{\mathbb{R}^3}|\xi^{\nu}|^{p-2}|\nabla_x \xi^{\nu}|^2 dxds\leq C(p)\left( \|\xi_0^{\nu}\|_{L^p(\mathbb{R}^3)}^p+\|g^{\nu}\|_{L^1((0,T),L^p(\mathbb{R}^3))}^p\right)
\end{align}
for all $t\in [0,T]$, where the constant $C(p)$ depends only on $p$ (but not on $\nu$). 
\end{lem}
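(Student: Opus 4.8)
The plan is to derive the energy estimate by testing the relative vorticity equation against $|\xi^\nu|^{p-2}\xi^\nu$, which is the natural $L^p$ pairing that produces the dissipation term on the left-hand side. Working in the cylindrical half-plane $\mathbb{H}$ with the weight $r\,dr\,dz$ (equivalently, in $\mathbb{R}^3$ via the integral transformation $\int_{\mathbb{R}^3}F\,dx = 2\pi\int_{\mathbb{H}}F\,r\,dr\,dz$), I would multiply the first equation of~\eqref{initial value problem for forced axisymmetric 3D Navier Stokes in relative vorticity formulation} by $|\xi^\nu|^{p-2}\xi^\nu$ and integrate over $\mathbb{H}$ against $r\,dr\,dz$. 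The time-derivative term yields $\frac{1}{p}\frac{d}{dt}\|\xi^\nu\|_{L^p(\mathbb{R}^3)}^p$ up to the constant $2\pi$, and the transport term $u^\nu\cdot\nabla\xi^\nu$ should integrate to zero after an integration by parts, using incompressibility of $u^\nu$ in the three-dimensional sense, i.e. $\nabla_x\cdot u^\nu=0$, which in cylindrical coordinates is precisely $\partial_r(ru_r^\nu)+\partial_z(ru_z^\nu)=0$. This last divergence-free condition is exactly what makes the weight $r$ compatible with the transport operator, so the advection term contributes nothing.

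Next I would treat the viscous term. The key observation is that the full three-dimensional viscous operator is $\nu(\Delta+\frac{1}{r}\partial_r)$ applied appropriately, but in the relative vorticity formulation it appears as $\nu(\Delta\xi^\nu+\frac{3}{r}\partial_r\xi^\nu)$. The point is that this operator, when paired with $|\xi^\nu|^{p-2}\xi^\nu$ against the measure $r\,dr\,dz$, should be recognized as (a multiple of) a genuine three-dimensional Laplacian acting on an axisymmetric function. Indeed, writing $\Delta\xi^\nu+\frac{3}{r}\partial_r\xi^\nu = \frac{1}{r^3}\nabla\cdot(r^3\nabla\xi^\nu)$ reveals the correct conjugated form: integrating against $r\,dr\,dz$ effectively introduces the weight $r^3$, and an integration by parts then produces the nonnegative dissipation $\nu(p-1)\int|\xi^\nu|^{p-2}|\nabla\xi^\nu|^2\,r^3\,dr\,dz$, which translates back to the stated term $\nu\int_{\mathbb{R}^3}|\xi^\nu|^{p-2}|\nabla_x\xi^\nu|^2\,dx$ up to the constant $C(p)$ capturing the factor $p-1$ and $2\pi$. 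The homogeneous Neumann boundary condition $\partial_r\xi^\nu=0$ on $\partial\mathbb{H}$ ensures the boundary terms from this integration by parts vanish.

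For the forcing term I would estimate $\int_{\mathbb{R}^3}g^\nu|\xi^\nu|^{p-2}\xi^\nu\,dx$ by H\"older's inequality as $\|g^\nu(s)\|_{L^p}\|\xi^\nu(s)\|_{L^p}^{p-1}$. Collecting the identities, I arrive at a differential inequality of the form $\frac{d}{dt}\|\xi^\nu(t)\|_{L^p}^p + (\text{dissipation}) \leq C\|g^\nu(t)\|_{L^p}\|\xi^\nu(t)\|_{L^p}^{p-1}$. Dividing by $\|\xi^\nu\|_{L^p}^{p-1}$ (handling the degenerate case where the norm vanishes by a standard approximation or by working with $\|\xi^\nu\|_{L^p}^p+\varepsilon$ and letting $\varepsilon\to 0$), this linearizes to a Gronwall-type inequality for $\frac{d}{dt}\|\xi^\nu(t)\|_{L^p}$, which integrated in time yields $\|\xi^\nu(t)\|_{L^p}\leq \|\xi_0^\nu\|_{L^p}+\|g^\nu\|_{L^1_tL^p_x}$. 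Raising to the $p$-th power and using the elementary inequality $(a+b)^p\leq 2^{p-1}(a^p+b^p)$ produces the claimed form with $C(p)$, and then re-integrating the dissipation term over $[0,t]$ gives the stated estimate.

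The main obstacle I anticipate is the rigorous justification of the viscous-term integration by parts and the correct bookkeeping of the weight $r$ versus $r^3$. One must verify carefully that the operator $\Delta+\frac{3}{r}\partial_r$ is the right self-adjoint form with respect to the measure $r\,dr\,dz$ (not $r^3\,dr\,dz$), so that the pairing genuinely produces a positive-definite dissipation and no stray first-order terms survive; this is precisely the genuinely three-dimensional feature distinguishing this computation from the flat two-dimensional case. A secondary technical point is ensuring the integrations by parts are admissible for the actual regularity class of $\xi^\nu$, which would be handled by a standard density or mollification argument and by appealing to the decay of $\xi^\nu$ at spatial infinity.
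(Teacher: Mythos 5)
Your approach is essentially the paper's: the paper's proof simply multiplies \eqref{initial value problem for forced axisymmetric 3D Navier Stokes in relative vorticity formulation} by $p|\xi^{\nu}|^{p-2}\xi^{\nu}r$, integrates over $\mathbb{H}$, cites \cite{nobili2019renormalization} (Lemma 6) for the unforced computation, and closes the forcing term by Young's inequality, absorbing $\varepsilon(p-1)\|\xi^{\nu}\|_{L^{\infty}_tL^p_x}^p$ into the left-hand side. Your alternative closure --- dividing by $\|\xi^{\nu}\|_{L^p}^{p-1}$ to get $\|\xi^{\nu}(t)\|_{L^p}\leq\|\xi_0^{\nu}\|_{L^p}+\|g^{\nu}\|_{L^1_tL^p_x}$ and then re-integrating for the dissipation --- is equally valid and arguably cleaner. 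One correction to your viscous-term bookkeeping: while the identity $\Delta\xi+\tfrac{3}{r}\partial_r\xi=\tfrac{1}{r^3}\nabla\cdot(r^3\nabla\xi)$ is right, the pairing is against the measure $r\,dr\,dz$, not $r^3\,dr\,dz$, so the dissipation you obtain is $(p-1)\int_{\mathbb{H}}|\xi^{\nu}|^{p-2}|\nabla\xi^{\nu}|^2\,r\,dr\,dz$ (which is exactly $\tfrac{p-1}{2\pi}\int_{\mathbb{R}^3}|\xi^{\nu}|^{p-2}|\nabla_x\xi^{\nu}|^2\,dx$ as claimed), \emph{plus} a leftover term $\tfrac{2}{p}\int\partial_r(|\xi^{\nu}|^p)\,dr\,dz=-\tfrac{2}{p}\int_{\mathbb{R}}|\xi^{\nu}(0,z)|^p\,dz\leq0$ coming from the mismatch between the weights $r$ and $r^3$; an $r^3$-weighted dissipation would instead correspond to $\int_{\mathbb{R}^3}|\xi^{\nu}|^{p-2}|\nabla_x\xi^{\nu}|^2\,r^2\,dx$, which is not the stated quantity. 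Since the stray term has a favourable sign, the conclusion is unaffected.
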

\begin{proof}
This result can for instance be consulted from \cite[Lemma 6]{nobili2019renormalization}, where it was proved for the case $f=0$. The idea is to multiply (\ref{initial value problem for forced axisymmetric 3D Navier Stokes in relative vorticity formulation}) by ${p}|\xi^{\nu}|^{p-2}\xi^{\nu}r$ and integrate in $\mathbb{H}$. We can absorb the term including the force by means of Young's inequality. In particular we have
\begin{align*}
p\int_0^t\int_{\mathbb{H}}|\xi^{\nu}|^{p-2}\xi r g^{\nu}drdzds&\leq  \frac{p}{2\pi}\int_0^T\|\xi^{\nu}\|_{L^p(\mathbb{R}^3)}^{p-1}\|g^{\nu}\|_{L^p(\mathbb{R}^3)}dx ds\\
 &\leq \varepsilon(p-1)\|\xi^{\nu}\|^p_{L^{\infty}((0,T),L^p(\mathbb{R}^3))}+C(\varepsilon)\|g^{\nu}\|_{L^1((0,T),L^p(\mathbb{R}^3))}^p,
\end{align*}
where $\epsilon$ is chosen small enough so that $\epsilon(p-1)<1$ and thus $\varepsilon(p-1)\|\xi^{\nu}\|^p_{L^{\infty}L^p}$ can be absorbed into the left hand side.
\end{proof}

\begin{thm}\label{Theorem of Nobili and Seis with forcing term}
Let $\nu >0$ and $1<p<\infty$. Let $u^{\nu}$ be the unique solution of~\eqref{initial value problem for forced axisymmetric 3D Navier Stokes} with initial velocity $u^{\nu}_0\in L^2_{loc}(\mathbb{R}^3)$ and forces {$(f^{\nu})$} chosen such that $\xi_0^{\nu}\in L^1\cap L^p(\mathbb{R}^3)$ and $g^\nu\in L^1((0,T),L^1\cap L^p(\mathbb{R}^3))$. Assume in addition
\begin{equation*}
\xi_0^{\nu}\to \xi_0\quad \text{in $L^1\cap L^p(\mathbb{R}^3)$},\quad\quad g^\nu\to g \quad \text{in $L^1((0,T),L^1\cap L^p(\mathbb{R}^3))$.}
\end{equation*}
 Then there exist  $\xi\in L^{\infty}(0,T;L^1\cap L^p(\mathbb{R}^3))$ and $u\in C([0,T],L^2_{loc}(\mathbb{R}^3)^3)$ as well as a subsequence $(\nu_k)_{k}\to 0$ such that
\begin{align*}
\xi^{\nu_k}\mathrel{\ensurestackMath{\stackon[1pt]{\rightharpoonup}{\scriptstyle\ast}}} \xi \text{ in } L^{\infty}(0,T;L^p(\mathbb{R}^3))
\end{align*}
and
\begin{align*}
u^{\nu_k}\to u \text{ in } C([0,T],L^2_{loc}(\mathbb{R}^3)^3).
\end{align*}
Furthermore $u$ is a distributional solution of~\eqref{initial value problem for forced axisymmetric 3D Euler} and $\xi$ is a renormalized solution of~\eqref{initial value problem for forced axisymmetric 3D Navier Stokes in relative vorticity formulation}.
\end{thm}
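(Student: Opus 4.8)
The plan is to follow the strategy of Nobili and Seis \cite{nobili2019renormalization}, extracting uniform bounds from the energy estimate of Lemma~\ref{energy estimate}, establishing weak-$*$ compactness of the relative vorticities and strong compactness of the velocities, and identifying the limits through the DiPerna--Lions framework \cite{dipernalions}; the forcing term $g^{\nu}$ enters only through terms that are handled by the assumed strong convergence $g^{\nu}\to g$.

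First I would record the uniform estimates. Applying Lemma~\ref{energy estimate} with exponent $p$ and with $p=1$, and using the convergence hypotheses on $\xi_0^{\nu}$ and $g^{\nu}$, the family $(\xi^{\nu})$ is bounded in $L^{\infty}(0,T;L^1\cap L^p(\mathbb{R}^3))$ uniformly in $\nu$. Since $1<p<\infty$, the space $L^{\infty}(0,T;L^p)$ is the dual of the separable space $L^1(0,T;L^q)$ with $\tfrac1p+\tfrac1q=1$, so Banach--Alaoglu yields a subsequence $(\nu_k)$ and a limit $\xi$ with $\xi^{\nu_k}\overset{*}{\rightharpoonup}\xi$ in $L^{\infty}(0,T;L^p)$; weak-$*$ lower semicontinuity of the norms gives $\xi\in L^{\infty}(0,T;L^1\cap L^p)$.

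Next I would establish strong compactness of the velocities. Writing $\omega^{\nu}=r\xi^{\nu}$ and using $u^{\nu}=G\ast\omega^{\nu}$, the mapping properties of the axisymmetric swirl-free Biot--Savart kernel \cite[Section 2]{sverakgallay} turn the uniform $L^1\cap L^p$ bounds on $\xi^{\nu}$ into a uniform bound for $(u^{\nu})$ in $L^{\infty}(0,T;X)$ for a space $X$ embedding compactly into $L^2_{loc}(\mathbb{R}^3)^3$. Testing the momentum equation in~\eqref{initial value problem for forced axisymmetric 3D Navier Stokes} against smooth compactly supported divergence-free fields bounds $\partial_t u^{\nu}$ uniformly in a space of negative regularity---the nonlinearity is controlled by the velocity bound, the viscous term carries a harmless factor $\nu$, and the force is controlled through the hypothesis on $g^{\nu}$---so the Aubin--Lions--Simon lemma furnishes a further subsequence with $u^{\nu_k}\to u$ in $C([0,T],L^2_{loc}(\mathbb{R}^3)^3)$. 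Passing to the limit in the Biot--Savart law identifies $u=G\ast\omega$ with $\omega=r\xi$, and in particular places $u$ in the DiPerna--Lions regularity class.

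It remains to identify the limits. Strong convergence of $u^{\nu_k}$ against the weak-$*$ convergence of $\xi^{\nu_k}$ (interpolating the $C_tL^2_{loc}$ convergence with the uniform velocity bound to reach the integrability dictated by $p$) lets us pass to the limit in the flux $u^{\nu_k}\xi^{\nu_k}$, so that $u$ is a distributional solution of~\eqref{initial value problem for forced axisymmetric 3D Euler}, the viscous term dropping out by its factor $\nu_k$. The main obstacle is the corresponding limit passage for the relative vorticity across the full range $1<p<\infty$: in the renormalized formulation one must identify the weak limits $\beta(\xi^{\nu_k})\overset{*}{\rightharpoonup}\beta(\xi)$ and $\beta'(\xi^{\nu_k})g^{\nu_k}\rightharpoonup\beta'(\xi)g$, which for small $p$---where the product $u\xi$ is only borderline integrable---requires almost-everywhere convergence of $\xi^{\nu_k}$ extracted from a DiPerna--Lions stability argument for the advection-diffusion equations driven by the strongly converging $u^{\nu_k}$. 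Combined with the vanishing of the $\nu_k$-weighted diffusion terms, guaranteed by the dissipation bound in~\eqref{energy estimate for Navier Stokes in relative vorticity formulation}, this shows that $\xi$ is a renormalized solution of~\eqref{initial value problem for forced axisymmetric 3D Euler in relative vorticity formulation}, completing the proof.
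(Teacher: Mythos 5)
Your overall skeleton (uniform bounds from Lemma~\ref{energy estimate}, Banach--Alaoglu for $\xi^{\nu}$, an Aubin--Lions--Simon argument for $u^{\nu}$, and the limit passage in the momentum equation using the strong $L^2_{loc}$ convergence of the velocities) is the same route the paper takes, namely rerunning the unforced arguments of Nobili--Seis with the force inserted. The compactness statements and the identification of $u$ as a distributional solution of~\eqref{initial value problem for forced axisymmetric 3D Euler} are fine as you describe them.

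The gap is in the final step, where you identify $\xi$ as a renormalized solution. To pass to the limit in $\beta(\xi^{\nu_k})$ you invoke ``almost-everywhere convergence of $\xi^{\nu_k}$ extracted from a DiPerna--Lions stability argument.'' This is not available at this stage, and asserting it begs the question: a.e.\ (equivalently, strong $L^p$) convergence of $\xi^{\nu_k}$ is precisely the content of the paper's main result, Theorem~\ref{main theorem}, whose proof takes the present theorem as an \emph{input} and requires all of Lemma~\ref{auxiliary lemma} --- in particular the cut-off construction needed because $u^{\nu}$ is only bounded in $L^1+L^{\infty}$ of the half-plane $\mathbb{H}$ and not of $\mathbb{R}^3$, so the off-the-shelf DiPerna--Lions stability theorems in $\mathbb{R}^3$ do not apply directly. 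The way to obtain renormalization of the weak-$*$ limit, and the one the paper (following Nobili--Seis and Crippa--Spirito) uses, is the duality argument of \cite[Theorem II.6]{dipernalions}: one tests the advection-diffusion equation against regularized solutions of the backward dual transport equation driven by the limit field $u$, for which only the weak-$*$ convergence of $\xi^{\nu_k}$ and the strong convergence of $u^{\nu_k}$ are needed; no nonlinear function of $\xi^{\nu_k}$ ever has to pass to the limit. The only modification required by the forcing is to carry the source term through the duality formula, which is harmless given $g^{\nu}\to g$ in $L^1((0,T),L^1\cap L^p(\mathbb{R}^3))$. You should replace your a.e.-convergence step by this duality argument.
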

\begin{proof}
The proof is mostly a review of the unforced case~\cite[Theorem 1]{nobili2019renormalization} and \cite[Theorem 2]{nobili2019renormalization}. The convergence can be shown as in \cite[Theorem 1]{nobili2019renormalization} with almost no changes. In the case of $f=0$ the argument for renormalization \cite[Theorem 2]{nobili2019renormalization} follows \cite{CrippaSpirito} and relies on a duality argument of DiPerna-Lions \cite{dipernalions}. Therefore the proof can be adopted if we include the forcing term in the duality formula \cite[Theorem II.6]{dipernalions}.
\end{proof}
\section{Strong convergence for Navier-Stokes equation in vorticity formulation}
With $L^p_c(\mathbb{R}^3)$ we shall denote the set of $L^p$-functions with compact (essential) support. Let us now state our main theorem:
\begin{thm}\label{main theorem}
Let $p>1$ and $\xi_0\in L^p_c(\mathbb{R}^3)$ axisymmetric. Let $R>0$ and $(\xi_0^{\nu})_{\nu}\subset L^p_c(\mathbb{R}^n)$ be axisymmetric and so that $\operatorname{supp}(\xi_0^{\nu})\subset B_R(0)$ for all $\nu >0$, and $\xi_0^{\nu}\to \xi_0$ in $L^p(\mathbb{R}^3)$.

Let $(g^{\nu})_{\nu}$ be a sequence of axisymmetric functions bounded in $L^1((0,T),L^1\cap L^{p}(\mathbb{R}^3))$ and $g\in L^1((0,T),L^1\cap L^{p}(\mathbb{R}^3))$ be axisymmetric
such that $g^{\nu}\to g$ in $L^1((0,T),L^{{p}}(\mathbb{R}^3))$.
Let $u^{\nu}$ and $\xi^{\nu}$ be the solution of (\ref{relative vorticity formulation NSE}) with initial datum $\xi_0^{\nu}$ and right hand side $g^{\nu}$.
Further let $\xi$ be a renormalized solution of (\ref{relative vorticity formulation Euler}) such that $\xi^{\nu_k} \mathrel{\ensurestackMath{\stackon[1pt]{\rightharpoonup}{\scriptstyle\ast}}} \xi$ in $L^{\infty}((0,T),L^p(\mathbb{R}^3))$ for a subsequence $(\nu_k)_{k\in\mathbb{N}}$.

Then the relative vorticities converge strongly,
\begin{align*}
\xi^{\nu_k}\to \xi \text{ in } C([0,T],L^p(\mathbb{R}^3)).
\end{align*}
\end{thm}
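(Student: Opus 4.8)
The plan is to upgrade the given weak$^*$ convergence to strong convergence by the classical mechanism that, in a uniformly convex space, weak convergence together with convergence of the norms implies strong convergence (the Radon--Riesz property). Since $L^p(\mathbb{R}^3)$ is uniformly convex for $1<p<\infty$, it suffices to show, for each fixed $t$, that $\xi^{\nu_k}(t)\rightharpoonup\xi(t)$ weakly in $L^p$ and that $\|\xi^{\nu_k}(t)\|_{L^p}\to\|\xi(t)\|_{L^p}$; a uniform-in-$t$ version of this implication will then deliver convergence in $C([0,T],L^p)$.

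First I would promote the weak$^*$ convergence in $L^\infty_tL^p$ to pointwise-in-time weak convergence in $L^p$. Writing \eqref{relative vorticity formulation NSE} in conservative form, $\partial_t\xi^{\nu_k}=-\nabla\cdot(u^{\nu_k}\xi^{\nu_k})+\nu_k(\Delta\xi^{\nu_k}+\tfrac{3}{r}\partial_r\xi^{\nu_k})+g^{\nu_k}$, the flux is controlled in $L^1_t$ of a fixed space (via the two-dimensional Biot--Savart/Sobolev bound $\|u\|\lesssim\|\omega\|_{L^p}$ recalled above), the viscous term tends to $0$, and the forcing is bounded in $L^1_tL^p$; hence $\partial_t\xi^{\nu_k}$ is bounded in $L^1_t$ of a negative-order space. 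Together with the uniform $L^\infty_tL^p$ bound this yields equicontinuity in time in a weak topology, so that $\xi^{\nu_k}\to\xi$ in $C_t(L^p\text{-weak})$ and, in particular, $\xi^{\nu_k}(t)\rightharpoonup\xi(t)$ for every $t$.

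The heart of the proof is the convergence of the norms. Refining the energy estimate of Lemma~\ref{energy estimate} (multiplying by $p|\xi^{\nu_k}|^{p-2}\xi^{\nu_k}r$ and integrating over $\mathbb{H}$) one obtains the identity
\[
\|\xi^{\nu_k}(t)\|_{L^p}^p+\nu_k D^{\nu_k}(t)=\|\xi_0^{\nu_k}\|_{L^p}^p+p\int_0^t\!\!\int_{\mathbb{R}^3}|\xi^{\nu_k}|^{p-2}\xi^{\nu_k}g^{\nu_k}\,dx\,ds,
\]
with dissipation $D^{\nu_k}\ge0$, while the renormalized Euler solution satisfies the corresponding equality with $\nu=0$ (renormalization with $\beta\approx|s|^p$, using $\nabla\cdot u=0$). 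Weak lower semicontinuity gives $\|\xi(t)\|_{L^p}\le\liminf_k\|\xi^{\nu_k}(t)\|_{L^p}$; for the reverse inequality I would pass to the limit in the identity, where $\|\xi_0^{\nu_k}\|_{L^p}^p\to\|\xi_0\|_{L^p}^p$ and the dissipation is discarded by its sign. To identify the limit of the nonlinear forcing term I would use the \emph{strong} convergence $u^{\nu_k}\to u$ of Theorem~\ref{Theorem of Nobili and Seis with forcing term} to derive a transport equation for the weak$^*$ limit of $|\xi^{\nu_k}|^p$, and compare it with the renormalized equation for $|\xi|^p$. Their difference is a nonnegative defect $w$ with $w|_{t=0}=0$, transported by $u$, depleted by the dissipation-defect measure and sourced only by the forcing defect; the goal is to conclude $w\equiv0$, equivalently $\|\xi^{\nu_k}(t)\|_{L^p}^p\to\|\xi(t)\|_{L^p}^p$.

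I expect two points to be the main obstacles. The first is exactly the forcing defect $\int_0^t\!\int(\chi-|\xi|^{p-2}\xi)g$, where $\chi$ is the weak$^*$ limit of $|\xi^{\nu_k}|^{p-2}\xi^{\nu_k}$: weak convergence does not commute with the nonlinearity $s\mapsto|s|^{p-2}s$, and no uniform spatial compactness is available, since the viscous gradient bound degenerates as $\nu\to0$. Here one first reduces to a fixed $g$ using $g^{\nu_k}\to g$ in $L^1_tL^p$ (absorbed by the uniform $L^\infty_tL^{p'}$ bound on $|\xi^{\nu_k}|^{p-2}\xi^{\nu_k}$), and then closes the defect estimate through the renormalization identity. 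The second obstacle, specific to the axisymmetric whole-space setting, is the absence of the global $L^1+L^\infty$ velocity bound available to Ciampa et al.: I would localize the energy and defect computations with a cut-off adapted to the cylindrical measure $r\,dr\,dz$, estimate the resulting commutator and scaling errors near $r=0$ and $r=\infty$, and let the cut-off radius tend to infinity. Finally, equicontinuity of $t\mapsto\|\xi^{\nu_k}(t)\|_{L^p}$ (read off from the energy identity), the $C_t(L^p\text{-weak})$ convergence, the strong time-continuity of the limit $\xi$, and a uniform form of the Radon--Riesz property together upgrade the pointwise strong convergence to convergence in $C([0,T],L^p)$.
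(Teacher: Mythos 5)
There is a genuine gap, and it sits exactly where you flag the ``first obstacle''. Your strategy hinges on passing to the limit in the $L^p$ energy identity, whose forcing term $p\int_0^t\!\int |\xi^{\nu_k}|^{p-2}\xi^{\nu_k}\,g^{\nu_k}\,dx\,ds$ is \emph{nonlinear} in $\xi^{\nu_k}$. The weak$^*$ limit $\chi$ of $|\xi^{\nu_k}|^{p-2}\xi^{\nu_k}$ cannot be identified with $|\xi|^{p-2}\xi$ from the available information: doing so is essentially equivalent to the strong convergence you are trying to prove, so the argument is circular. Your fallback --- writing a transport equation for the defect $w=\overline{|\xi|^p}-|\xi|^p\geq 0$ and concluding $w\equiv 0$ from $w|_{t=0}=0$ --- works in the unforced case, where the defect equation has a nonpositive right-hand side (the dissipation defect) and DiPerna--Lions uniqueness applies. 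With forcing, the defect equation acquires the source $p(\chi-|\xi|^{p-2}\xi)g$, which has no sign and is not controlled by $w$ in any way that closes a Gr\"onwall argument along a merely $W^{1,1}_{loc}$ vector field. ``Closing the defect estimate through the renormalization identity'' is precisely the step that is missing, and I do not see how to supply it. (For $p=2$ the forcing term is linear in $\xi^{\nu}$ and your scheme would go through, but the theorem concerns general $p\in(1,\infty)$ with data only in $L^p$, so the $L^2$ identity is not available.)

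The paper circumvents this entirely by a different architecture, following \cite{ConstDrivas} and \cite{NussenzveigLopes2021}: mollify the data and the force, and split via the triangle inequality \eqref{estimate by triangle inequality} into $\|\xi^{\nu}-\xi_n^{\nu}\|+\|\xi_n^{\nu}-\xi_n\|+\|\xi_n-\xi\|$, where $\xi_n^{\nu}$ and $\xi_n$ solve the \emph{linear} problems \eqref{relative vorticity formulation for forced NSE}--\eqref{relative vorticity formulation for forced Euler} with mollified data $\xi_0^\nu*\psi_n$, $g^\nu*\psi_n$ but the original velocities $u^{\nu}$, $u$. The outer terms are small uniformly in $\nu$ by the stability estimate of Lemma~\ref{energy estimate}; the middle term is handled by Lemma~\ref{auxiliary lemma}, whose energy step is an $L^2$ identity for data in $L^1\cap L^\infty$ in which the forcing contribution $\int g^{\nu}\rho^{\nu}$ is \emph{linear} in the solution, so weak$^*$ convergence of $\rho^{\nu}$ against the strongly convergent $g^{\nu}$ identifies the limit with no defect to control. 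This is the idea your proposal lacks; without it (or a genuinely new argument identifying $\chi$), the norm-convergence step does not go through. Your remarks on upgrading to $C_t(L^p$-weak$)$ and on the cut-off adapted to the measure $r\,dr\,dz$ do match Steps 3 and 6 of the paper's auxiliary lemma, but they are downstream of the unproved core.
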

In order to prove this, we will first deal with an auxiliary result for the inviscid limit in the axisymmetric swirl-free framework. Recall that the $3D$ axisymmetric swirl-free transport equation and advection-diffusion equation may be reformulated in the half-plane $\mathbb{H}$ as we pointed out in the introduction. To this end, consider the initial value problem for the transport equation
\begin{align}\label{transport equation for auxiliary lemma}
\begin{cases}
\partial_t \rho + b\cdot \nabla \rho = g \quad&\text{in } (0,T)\times \mathbb{H},\\
\partial_r \rho=0 \quad&\text{on } \partial\mathbb{H},\\
\rho|_{t=0}=\rho_0,
\end{cases}
\end{align}
and for given $\nu >0$ consider the initial value problem for the associated advection-diffusion equation
\begin{align}\label{Navier Stokes for auxiliary lemma}
\begin{cases}
\partial_t \rho^{\nu}+b^{\nu}\cdot \nabla \rho^{\nu} = \nu \left(\Delta \rho^{\nu}+\frac{3}{r}\partial_r \rho^{\nu}\right) + g^{\nu} \quad&\text{in } (0,T)\times \mathbb{H},\\
\partial_r \rho^\nu=0 \quad&\text{on } \partial\mathbb{H},\\
\rho^{\nu}|_{t=0}=\rho^{\nu}_0.
\end{cases}
\end{align}
In the forthcoming lemma we want to extend \cite[Lemma 3.3]{Ciampa2021}, in which the authors established a result of convergence without external forces, to the case with external forces in the axisymmetric framework. Note that the result in \cite{Ciampa2021} does not depend on the dimension while we will focus on the axisymmetric three-dimensional case without swirl.
\begin{lem}\label{auxiliary lemma}
Let $\rho_0\in L^1(\mathbb{R}^3)\cap L^{\infty}(\mathbb{R}^3)$ and $(\rho_0^{\nu})_{\nu}$ be axisymmetric so that 
\begin{align}
\rho_0^{\nu}\to \rho_0 \text{ in } L^1(\mathbb{R}^3),\label{auxiliary lemma:convergence of rhonu in L1}\\
(\rho_0^{\nu})_\nu \text{ is bounded in } L^{\infty}.\label{auxiliary lemma:weak convergence of rhonu in Linfty}
\end{align}
Let $b=(b_r,0,b_z)$ and $g$ be axisymmetric. Further let $b$ and $g$ fulfil the following conditions:
\begin{align}
&b\in L^1((0,T),W^{1,1}_{loc}(\mathbb{R}^3)^3) \label{auxiliary lemma:first condition on b},\\
&b\in L^{\infty}((0,T),L^1+L^{\infty}(\mathbb{H}))\label{auxiliary lemma:second condition on b},\\
&\operatorname{div}(b)=0 \text{ in } \mathbb{R}^3\label{auxiliary lemma:third condition on b},\\
&g\in L^1((0,T),L^1\cap L^{\infty}(\mathbb{R}^3)).
\end{align}
Let $b^{\nu}=(b^{\nu}_r,0,b^{\nu}_z)$ and $g^{\nu}$ be axisymmetric. Let $b^{\nu}$ and $g^{\nu}$ satisfy 
\begin{align}
&\operatorname{div}(b^{\nu})=0 \text{ in } \mathbb{R}^3,\\
&b^{\nu}\to b \text{ in } L^1_{loc}((0,T)\times \mathbb{R}^3) 
\label{auxiliary lemma:local convergence of bnu},\\
&(b^{\nu})_{\nu}=(b^{\nu}_1+b^{\nu}_2)_{\nu} \text{ bounded in } L^{\infty}((0,T), (L^1\cap L^s)+L^{\infty}(\mathbb{H}))\label{auxiliary lemma:boundedness of bnu}\\
&\text{ for some } s\in (1,2)\notag\\
&(g^{\nu})_{\nu} \text{ bounded in } L^1((0,T),L^1\cap L^{\infty}(\mathbb{R}^3))\label{auxiliary lemma:boundedness of gnu},\\
&g^{\nu}\to g \text{ in } L^1((0,T),L^{{p}}(\mathbb{R}^3)) \text{ for some } 1<p<\infty\label{auxiliary lemma:convergence of gnu}.
\end{align}
%
%
Let $\rho^{\nu}\in L^{\infty}((0,T),L^1\cap L^{\infty}(\mathbb{R}^3))$ be the unique solution  of (\ref{Navier Stokes for auxiliary lemma}) and let $\rho \in L^{\infty}((0,T),L^1\cap L^{\infty}(\mathbb{R}^3))$ be the unique solution of (\ref{transport equation for auxiliary lemma}).
Then for all $1\leq q<\infty$ we have
\begin{align*}
\rho^{\nu}\to \rho \text{ in }C([0,T],L^q(\mathbb{R}^3)).
\end{align*}
\end{lem}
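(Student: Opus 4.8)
The plan is to first reduce to convergence in $C([0,T],L^1(\mathbb{R}^3))$ and then recover the full range $1\le q<\infty$ by interpolation. From the maximum principle for~\eqref{Navier Stokes for auxiliary lemma}, together with~\eqref{auxiliary lemma:weak convergence of rhonu in Linfty} and the forcing bound~\eqref{auxiliary lemma:boundedness of gnu}, the family $(\rho^\nu)_\nu$ is bounded in $L^\infty((0,T),L^1\cap L^\infty(\mathbb{R}^3))$ uniformly in $\nu$, and the transport estimate gives the same for $\rho$. Writing $M$ for a common bound, for every $1\le q<\infty$ and every $t$ we have
\[
\|\rho^\nu(t)-\rho(t)\|_{L^q(\mathbb{R}^3)}^q\le (2M)^{q-1}\,\|\rho^\nu(t)-\rho(t)\|_{L^1(\mathbb{R}^3)},
\]
so it suffices to show that $\sup_{t\in[0,T]}\|\rho^\nu(t)-\rho(t)\|_{L^1(\mathbb{R}^3)}\to0$ as $\nu\to0$.

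\textbf{Lagrangian and stochastic representations.} For this $L^1$ estimate I would follow the Lagrangian strategy of~\cite{Ciampa2021}, now incorporating the forcing. By~\eqref{auxiliary lemma:first condition on b} and~\eqref{auxiliary lemma:third condition on b}, DiPerna--Lions theory yields a unique measure-preserving regular Lagrangian flow $X$ of $b$, along which $\rho$ is transported with a Duhamel contribution from the force, $\rho(t,X_{0,t}(x))=\rho_0(x)+\int_0^t g(s,X_{0,s}(x))\,ds$. The viscous solution $\rho^\nu$ admits the corresponding stochastic representation associated with the SDE $dX^\nu_t=b^\nu(t,X^\nu_t)\,dt+\sqrt{2\nu}\,dW_t$, adapted to the operator $\Delta+\tfrac{3}{r}\partial_r$ and the Neumann condition on $\partial\mathbb{H}$ (a Bessel-type radial process reflected at the axis): $\rho^\nu$ is the expectation of $\rho_0^\nu$ transported along the noisy characteristics, again with a Duhamel term built from $g^\nu$.

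\textbf{Quantitative flow comparison.} The core is a quantitative comparison of the two flows through the logarithmic functional of Crippa--De Lellis and Seis,
\[
\Phi^\nu_\delta(t)=\int_{\mathbb{R}^3}|\rho_0(x)|\,\mathbb{E}\,\log\!\Big(1+\frac{|X^\nu_{0,t}(x)-X_{0,t}(x)|}{\delta}\Big)\,dx.
\]
Differentiating in $t$ and splitting $b^\nu(X^\nu)-b(X)=[b^\nu-b](X^\nu)+[b(X^\nu)-b(X)]$, the first difference is controlled using the local convergence~\eqref{auxiliary lemma:local convergence of bnu}, the second by a maximal-function estimate on $\nabla b$ (integrable by~\eqref{auxiliary lemma:first condition on b}) which feeds a Gronwall argument, while It\^o's formula contributes a diffusive error of order $\nu t/\delta^2$. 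Optimising in $\delta$ forces the typical flow separation to be small; together with $\rho_0^\nu\to\rho_0$ in $L^1$ from~\eqref{auxiliary lemma:convergence of rhonu in L1} and $g^\nu\to g$ in $L^1_tL^p_x$ from~\eqref{auxiliary lemma:convergence of gnu} (to close the Duhamel terms), the quantitative stability estimate then converts this smallness into the desired convergence $\sup_t\|\rho^\nu-\rho\|_{L^1(\mathbb{R}^3)}\to0$, where the uniform $L^\infty$ bound supplies the equi-integrability needed for the upgrade.

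\textbf{The main obstacle.} I expect the main obstacle to be exactly the one flagged in the introduction. The drift is only integrable with respect to the two-dimensional measure on $\mathbb{H}$, through~\eqref{auxiliary lemma:second condition on b} and~\eqref{auxiliary lemma:boundedness of bnu}, and \emph{not} in $L^1+L^\infty(\mathbb{R}^3)$, because the change of variables $\int_{\mathbb{R}^3}F\,dx=2\pi\int_{\mathbb{H}}F\,r\,dr\,dz$ inserts the weight $r$; this same weight is responsible for the mismatch between the measure that makes $\Delta+\tfrac3r\partial_r$ self-adjoint and the $L^p(\mathbb{R}^3)$ norm we measure in. Since the drift-difference and maximal-function terms above require genuine $\mathbb{R}^3$-integrability, I would introduce a cut-off adapted to the cylindrical scaling, use the splitting $b^\nu=b^\nu_1+b^\nu_2$ of~\eqref{auxiliary lemma:boundedness of bnu}, and treat separately the region near the axis $\{r=0\}$ and the region near spatial infinity, exploiting the $L^\infty$ part and the decay of the $(L^1\cap L^s)$ part to keep the $r$-weighted contributions uniformly small in $\nu$. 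Reconciling these cut-off errors with the flow estimate, uniformly in $\nu$, is the delicate point of the argument.
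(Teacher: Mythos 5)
Your proposed route is genuinely different from the paper's: the paper never introduces Lagrangian flows or stochastic representations for this lemma. It argues in a purely Eulerian way: (i) the $L^2$ energy balance for the transport equation and the $L^2$ energy inequality for the advection--diffusion equation, combined with the weak-$*$ convergence $\rho^\nu\mathrel{\ensurestackMath{\stackon[1pt]{\rightharpoonup}{\scriptstyle\ast}}}\rho$ and the vanishing of the viscous dissipation, yield convergence of the space--time $L^2$ norms and hence strong convergence in $L^2((0,T)\times\mathbb{R}^3)$, upgraded to $L^q$ by interpolation against the uniform $L^1\cap L^\infty$ bounds; (ii) an Arzel\`a--Ascoli argument applied to $t\mapsto\int\rho^\nu\varphi\,dx$ and $t\mapsto\int|\rho^\nu|^q\varphi\,dx$ (the latter using renormalization) converts this into convergence in $C([0,T],L^q_w)$ together with uniform-in-time convergence of local $L^q$ norms, hence $C([0,T],L^q_{loc})$; (iii) a cylindrical cut-off inserted into the energy estimate controls the tails at infinity, and it is exactly there that the hypothesis \eqref{auxiliary lemma:boundedness of bnu} on $b^\nu$ in $L^{\infty}((0,T),(L^1\cap L^s)+L^{\infty}(\mathbb{H}))$ is used to absorb the Jacobian weight $r$. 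Your cut-off discussion in the last paragraph is in the same spirit as (iii), but you want to deploy it inside a flow estimate rather than an energy estimate. Your initial reduction to $C([0,T],L^1)$ via $\|f\|_{L^q}^q\le(2M)^{q-1}\|f\|_{L^1}$ is fine and is a legitimate simplification the paper does not use.

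The genuine gap is in your ``quantitative flow comparison''. The logarithmic functional of Crippa--De Lellis closes via Gronwall only when the maximal function $M(\nabla b)$ is integrable, which requires $\nabla b\in L^1_tL^p_x$ for some $p>1$; under the lemma's hypothesis \eqref{auxiliary lemma:first condition on b}, namely $b\in L^1((0,T),W^{1,1}_{loc})$, the maximal function of $\nabla b$ lies only in weak-$L^1$, so the pointwise bound $|b(X^\nu)-b(X)|\lesssim\bigl(M\nabla b(X^\nu)+M\nabla b(X)\bigr)\,|X^\nu-X|$ cannot be integrated against $|\rho_0|\,dx$ and the Gronwall step does not close. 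The known remedies (Bouchut--Crippa, Seis) need $\nabla b$ to be a singular integral of an $L^1$ function, a structure that is not part of the lemma's hypotheses even though it happens to hold for the Biot--Savart field in the application; so as stated your argument fails at the lemma's level of generality. In addition, the two points you yourself flag --- a rigorous stochastic representation for the degenerate operator $\nu(\Delta+\tfrac{3}{r}\partial_r)$ with the boundary condition on the axis, and the uniform-in-$\nu$ reconciliation of the cylindrical cut-off with the flow estimate --- are precisely the hard parts and remain declarations of intent. The paper's Eulerian scheme avoids all of these issues, using $W^{1,1}_{loc}$ regularity of $b$ only through the DiPerna--Lions well-posedness and renormalization theory, where it suffices.
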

\begin{rk}
Under the conditions~\eqref{auxiliary lemma:convergence of rhonu in L1}-\eqref{auxiliary lemma:convergence of gnu} there indeed exist unique solutions $\rho,\rho^{\nu}\in L^{\infty}((0,T),L^1\cap L^{\infty}(\mathbb{R}^3))$ of the corresponding weak formulations in $(0,T)\times\mathbb{R}^3$ of~\eqref{transport equation for auxiliary lemma} and~\eqref{Navier Stokes for auxiliary lemma}, which are in fact renormalized. 
A proof for the case $f=0$ can be found in \cite[Theorem II.3]{dipernalions}. In the general case the arguments can be adopted with almost no changes.
\end{rk}
\begin{proof}[Proof of Lemma \ref{auxiliary lemma}] First, recall that for any $p\in [1,\infty]$  we have the following estimate thanks to Lemma \ref{energy estimate}:
\begin{align}\label{auxiliary lemma:energy estimate for Navier Stokes equation in relative vorticity formulation}
{\|\rho^{\nu}(t)\|_{L^p(\mathbb{R}^3)}^p}\leq C\left(\|\rho_0^{\nu}\|_{L^p(\mathbb{R}^3)}^p+\|g^{\nu}\|_{L^1((0,T),L^p(\mathbb{R}^3))}^p\right).
\end{align}
Due to (\ref{auxiliary lemma:convergence of rhonu in L1}), (\ref{auxiliary lemma:weak convergence of rhonu in Linfty}) and (\ref{auxiliary lemma:boundedness of gnu}), equation (\ref{auxiliary lemma:energy estimate for Navier Stokes equation in relative vorticity formulation}) implies that $(\rho^{\nu})_{\nu}$ is equibounded in $L^{\infty}((0,T),L^1\cap L^{\infty}(\mathbb{R}^3))$ and $\rho_0^{\nu} \mathrel{\ensurestackMath{\stackon[1pt]{\rightharpoonup}{\scriptstyle\ast}}}{\rho_0}\text{ in } L^{\infty}((0,T),L^p(\mathbb{R}^3))$. Moreover there exists $\tilde{\rho}\in L^{\infty}((0,T),L^1\cap L^{\infty}(\mathbb{R}^3))$ such that along a subsequence we have for any $p\in (1,\infty)$
$$\rho^{\nu} \mathrel{\ensurestackMath{\stackon[1pt]{\rightharpoonup}{\scriptstyle\ast}}}\tilde{\rho}\text{ in } L^{\infty}((0,T),L^p(\mathbb{R}^3)).$$
Thanks to the linearity one can immediately deduce that $\tilde{\rho}$ is a distributional solution  of~\eqref{transport equation for auxiliary lemma} with initial datum $\rho_0$, and hence due to uniqueness $\tilde{\rho}=\rho$. Thus, the whole sequence is converging.\\

\textbf{Step 1: Strong convergence in} $L^2((0,T)\times \mathbb{R}^3)$\\
Due to renormalization we observe
\begin{align}
&\|\rho^{\nu}(t)\|_{L^2(\mathbb{R}^3)}^2 +2\nu\int_0^t\int_{\mathbb{R}^3}|\nabla \rho^\nu(s,x)|^2dxds\leq \|\rho_0^{\nu}\|_{L^2(\mathbb{R}^3)}^2+\int_0^t \int_{\mathbb{R}^3} g^{\nu}\rho^{\nu} dx 
 ds\label{auxiliary lemma:L2 energy estimate for Navier Stokes equation in relative vorticity formulation},\\
&\|\rho(t)\|_{L^2(\mathbb{R}^3)}^2=\|\rho_0\|_{L^2(\mathbb{R}^3)}^2+\int_0^t\int_{\mathbb{R}^3} g\rho dx ds\label{inviscidl2est},
\end{align}
for almost all $t\in (0,T)$. Neglecting the non-negative viscous term, this implies
\begin{align*}
\int_{\mathbb{R}^3}|\rho^{\nu}(t)|^2-|\rho(t)|^2 dx 
&\leq \int_{\mathbb{R}^3}|\rho_0^{\nu}|^2-|\rho_0|^2 dx + \|g^{\nu}-g\|_{L^1((0,T),L^{{p}}(\mathbb{R}^3))}\|\rho^{\nu}\|_{L^{\infty}((0,T)L^{{p}'}(\mathbb{R}^3))}\\
&+\int_0^t \int_{\mathbb{R}^3}g(\rho^{\nu}-\rho) dx ds.
\end{align*}
Integrating in time once more gives
\begin{align*}
\int_0^T\int_{\mathbb{R}^3}|\rho^{\nu}(t)|^2-|\rho(t)|^2 dxdt &\leq T \int_{\mathbb{R}^3}|\rho_0^{\nu}|^2-|\rho_0|^2 dx\\
&+ T\|g^{\nu}-g\|_{L^1((0,T),L^{{p}}(\mathbb{R}^3))}\|\rho^{\nu}\|_{L^{\infty}(0,T;L^{{p}'}(\mathbb{R}^3))}\\
&+\int_0^T\int_0^t \int_{\mathbb{R}^3}g(\rho^{\nu}-\rho) dx ds dt.
\end{align*}
By Fubini's Theorem, the last integral can be simplified as 
\begin{align*}
\int_0^T\int_0^t \int_{\mathbb{R}^3}g(\rho^{\nu}-\rho) dx ds dt&=\int_0^T\int_s^T\int_{\mathbb{R}^3} g(s,x)(\rho^{\nu}(s,x)-\rho(s,x)) dx dt ds\\
&=\int_0^T\int_{\mathbb{R}^3}(T-s)g(\rho^{\nu}-\rho)dx ds.
\end{align*}
Since the function $(s,x)\mapsto (T-s)g(s,x)$ is still in $L^1(0,T;L^p(\mathbb{R}^3))$ and $\rho^{\nu} \mathrel{\ensurestackMath{\stackon[1pt]{\rightharpoonup}{\scriptstyle\ast}}}{\rho}$ in $L^\infty(0,T;L^{p'}(\mathbb{R}^3))$, we may conclude
\begin{align}\label{auxiliary lemma:lim sup inequality}
\limsup_{\nu\to 0} \left(\|\rho^{\nu}\|_{L^2((0,T),L^2(\mathbb{R}^3))}^2-\|\rho\|_{L^2((0,T),L^2(\mathbb{R}^3))}^2\right)\leq 0.
\end{align}
Further we have by weak lower semicontinuity
\begin{align*}
\liminf_{\nu\to 0} \left(\|\rho^{\nu}\|_{L^2((0,T),L^2(\mathbb{R}^3))}^2-\|\rho\|_{L^2((0,T),L^2(\mathbb{R}^3))}^2\right)\geq 0.
\end{align*}
This establishes
\begin{align*}
\|\rho^{\nu}\|_{L^2((0,T),L^2(\mathbb{R}^3))}\to \|\rho\|_{L^2(0,T;L^2(\mathbb{R}^3))}.
\end{align*}
Together with the fact that $\rho^{\nu}\rightharpoonup \rho$ in $L^2(0,T;L^2(\mathbb{R}^3))$ we accomplish $\rho^{\nu}\to \rho$ in $L^2(0,T;L^2(\mathbb{R}^3))$.

As a consequence of this convergence, we obtain that there is no `anomalous dissipation' of the $L^2$ norm, that is,
\begin{equation}\label{anomalous}
\lim_{\nu\searrow 0}\nu \int_0^T\int_{\mathbb{R}^3}|\nabla \rho^\nu(s,x)|^2dxds =0.
\end{equation} 
Indeed, this follows immediately after integrating~\eqref{auxiliary lemma:L2 energy estimate for Navier Stokes equation in relative vorticity formulation} and~\eqref{inviscidl2est} in $t$ using the convergence just established as well as the convergence of the force and initial terms. 
\\\\
\textbf{Step 2: Strong convergence in} $L^q((0,T)\times \mathbb{R}^3)$ for $1<q<\infty$\\
For $q\in (1,2)$ we use H\"older's inequality with $\frac{2}{q-1}$ and $\frac{2}{3-q}$,
\begin{align*}
\int_0^T\int_{\mathbb{R}^3}|\rho^{\nu}(t)-\rho(t)|^q dx ds&=\int_0^T\int_{\mathbb{R}^3}|\rho^{\nu}(t)-\rho(t)|^{q-1}|\rho^{\nu}(t)-\rho(t)| dx ds\\
&\leq \|\rho^{\nu}-\rho\|_{L^2(0,T;L^2(\mathbb{R}^3))}^{q-1} \|\rho^{\nu}-\rho\|_{L^{\frac{2}{3-q}}(0,T;L^{\frac{2}{3-q}}(\mathbb{R}^3))}.
\end{align*}
If $q\geq 2$ we observe
\begin{align*}
\int_0^T \int_{\mathbb{R}^3}|\rho^{\nu}(t)-\rho(t)|^q dx ds&=\int_0^T \int_{\mathbb{R}^3}|\rho^{\nu}(t)-\rho(t)|^{q-1}|\rho^{\nu}(t)-\rho(t)| dx ds\\
&\leq \|\rho^{\nu}-\rho\|_{L^{2(q-1)}(0,T;L^{2(q-1)}(\mathbb{R}^3))}^{q-1}\|\rho^{\nu}-\rho\|_{L^2(0,T;L^2(\mathbb{R}^3))}.
\end{align*}
Owing to the convergence in $L^2((0,T)\times\mathbb{R}^3)$ we deduce $\rho^{\nu}\to \rho$ in $L^q((0,T)\times \mathbb{R}^3)$ for all $q\in (1,\infty)$.\\\\
\textbf{Step 3: Convergence in} $C([0,T];L^q_w(\mathbb{R}^3))$ for $1<q<\infty$\\
For $\varphi\in C_c^{\infty}(\mathbb{H})$ we define
\begin{align*}
f_{\varphi}:t\in [0,T]\mapsto 2\pi\int_{\mathbb{H}}\rho(t,r,z)\varphi(r,z)r drdz
\end{align*}
and for $\nu>0$
\begin{align*}
f_{\varphi}^{\nu}:t\in [0,T]\mapsto 2\pi\int_{\mathbb{H}} \rho^{\nu}(t,r,z)\varphi(r,z)rdrdz.
\end{align*}
Note that $f_{\varphi}$ is continuous since solutions of~\eqref{transport equation for auxiliary lemma} with initial data in $L^q$ are known to be in $C([0,T],L^q_w(\mathbb{R}^3))$ (see for instance Step 1 in the proof of Proposition 1 in~\cite{NussenzveigLopes2021}).
Moreover $(f_{\varphi}^{\nu})_{\nu}$ and $(\partial_t f_{\varphi}^{\nu})_{\nu}$ are uniformly bounded in $[0,T]$. Indeed,
\begin{align*}
\partial_t f_{\varphi}^{\nu}(t)=&\int_{\mathbb{R}^3}\rho^{\nu}(t,x)\left(b^{\nu}(t,x)\cdot\nabla_x \varphi + \nu \Delta_x \varphi(x)\right)+g^{\nu}(t,x)\varphi(x)dx\\
&+4\pi\int_{\mathbb{H}}{\nu}\partial_r \varphi(r,z)\rho^{\nu}(t,r,z)drdz,
\end{align*}
which can be seen to be bounded by virtue of (\ref{auxiliary lemma:energy estimate for Navier Stokes equation in relative vorticity formulation}), (\ref{auxiliary lemma:boundedness of bnu}) and (\ref{auxiliary lemma:boundedness of gnu}). Note that we exploited the axisymmetry and swirl-free property of $b^{\nu}$ analogously to~\eqref{nabla in cylindrical coordinates} in order to swap $b^{\nu}\cdot \nabla \varphi$ with $b^{\nu}\cdot \nabla_x \varphi$.
Thus, in combination with the weak* convergence of $(\rho^{\nu})_{\nu}$ and~\eqref{auxiliary lemma:local convergence of bnu}, we may conclude
$\partial_t f_{\varphi}^{\nu}\to \partial_t f_{\varphi}$ in $L^1(0,T)$.
By an Arzelà-Ascoli type argument, we deduce $f_{\varphi}^{\nu}\to f_{\varphi}$ uniformly in $[0,T]$.
Due to the density of $C_c^{\infty}(\mathbb{H})$ in $L^{q'}(\mathbb{H})$, it follows that $$\rho^{\nu}\to \rho \quad\text{ in } C([0,T],L^{q}_{w}(\mathbb{R}^3)).$$
\textbf{Step 4: Convergence of $L^{q}$-norms on bounded sets} for $1<q<\infty$\\
Let $1<q<\infty$ and $\nu >0$. Let us consider  
\begin{align*}
h_{\varphi}:t\in [0,T]\mapsto 2\pi\int_{\mathbb{H}}|\rho(t,r,z)|^q\varphi(r,z)rdrdz
\end{align*}
and 
\begin{align*}
h_{\varphi}^{\nu}:t\in [0,T]\mapsto 2\pi\int_{\mathbb{H}}|\rho^{\nu}(t,r,z)|^q\varphi(r,z)r drdz
\end{align*}
for $\varphi\in C_c^{\infty}(\mathbb{H})$.
Now, by differentiating $h_{\varphi}$ and $h_{\varphi}^{\nu}$ with respect to time we may invoke~\eqref{transport equation for auxiliary lemma} and~\eqref{Navier Stokes for auxiliary lemma}. Note that $b$ and $b^{\nu}$ have zero swirl components and consequently we can again swap $b\cdot \nabla \varphi$ by $b\cdot \nabla_x \varphi$ and similarly for $b^{\nu}\cdot \nabla \rho^{\nu}$. We also use that $\rho$ is renormalized:
\begin{align*}
\partial_t h_{\varphi}&=\int_{\mathbb{R}^3} |\rho|^q b\cdot\nabla_x\varphi+ qg|\rho|^{q-2}\rho\varphi~dx,\\
\partial_t h_{\varphi}^{\nu}=&\int_{\mathbb{R}^3}  |\rho^{\nu}|^q b^{\nu}\cdot\nabla_x \varphi+\nu\left(-q(q-1)|\rho^{\nu}|^{q-2}|\nabla_x\rho^{\nu}|^2 \varphi + |\rho^{\nu}|^q\Delta_x \varphi\right)+qg^{\nu}|\rho^{\nu}|^{q-2}\rho^{\nu}\varphi dx\\
&-4\pi\nu\int_{\mathbb{H}}|\rho^{\nu}|^q \partial_r \varphi drdz.
\end{align*}
Owing to Step 2 and~\eqref{auxiliary lemma:local convergence of bnu}, \eqref{auxiliary lemma:convergence of gnu}, it immediately follows 
\begin{align*}
\int_0^T\int_{\mathbb{R}^3} |\rho^{\nu}|^q b^{\nu}\cdot\nabla_x\varphi+ g^{\nu}|\rho^{\nu}|^{q-2}\rho^{\nu}\varphi~dx dt \to \int_0^T \int_{\mathbb{R}^3} |\rho|^q b\cdot\nabla_x\varphi+ g|\rho|^{q-2}\rho\varphi~dxdt
\end{align*}
as $\nu \to 0$.
The remaining terms vanish as the viscosity tends to zero. Indeed due to~\eqref{auxiliary lemma:energy estimate for Navier Stokes equation in relative vorticity formulation} and~\eqref{anomalous}, we have for any $\varphi \in C_c^{\infty}(\mathbb{H})$, as $\nu\to0$,
\begin{align*}
&\nu\int_0^T\int_{\mathbb{R}^3} q(q-1)|\rho^{\nu}|^{q-2}|\nabla_x\rho^{\nu}|^2 |\varphi| + |\rho^{\nu}|^q|\Delta_x \varphi|~dxdt\\
&\quad+4\pi\nu\int_0^T\int_{\mathbb{H}}|\rho^{\nu}|^q |\partial_r \varphi| drdz\to 0.
\end{align*}
Therefore, $\partial_t h_{\varphi}^{\nu}\to \partial_t h_{\varphi}$ in $L^1(0,T)$. Then again by the Theorem of Arzelà-Ascoli, we conclude
\begin{align*}
\int_{\mathbb{R}^3} |\rho^{\nu}|^q \varphi dx \to \int_{\mathbb{R}^3}|\rho|^q\varphi dx 
\end{align*}
uniformly in $[0,T]$. It also follows from $\partial_t h_\phi\in L^1(0,T)$ that $\rho\in C([0,T];L^q(B_R(0)))$, as seen by a standard argument that uses an approximation of the characteristic function of $B_R(0)$ by functions in $C_c^\infty$.
\\\\
\textbf{Step 5: Local strong convergence in }$C([0,T],L^q_{loc}(\mathbb{R}^3))$, $1< q <\infty$\\
We only have to combine the results of Step 3 and Step 4. Indeed for any $(t_{\nu})_{\nu}\subset [0,T]$, $t_{\nu}\to t$, and any test function $\varphi\in C_c^\infty(\mathbb H)$, Step 3 reveals 
\begin{align*}
\int_{\mathbb{R}^3}\rho^{\nu}(t_{\nu},x)\varphi(x) dx\to \int_{\mathbb{R}^3}\rho(t,x) \varphi(x) dx
\end{align*}
as $\rho^\nu\to \rho$ in $C([0,T];L^q_w(\mathbb{R}^3))$. Likewise,
\begin{align*}
\int_{B_R(0)} |\rho^{\nu}(t_{\nu},x)|^q dx \to \int_{B_R(0)} |\rho(t,x)|^q dx 
\end{align*}
for all $R>0$ due to Step 4. As weak convergence and convergence of the norms jointly imply strong convergence, we know that $\rho^\nu(t_\nu)\to \rho(t)$ in $L^q(B_R(0))$. This now implies $\rho^{\nu}\to\rho$ in $L^q(B_R(0))$ uniformly on $[0,T]$: Indeed, if this were not the case, then there would exist $\delta>0$ and a sequence $t_\nu\to t$ of times such that 
\begin{equation*}
\|\rho^\nu(t_\nu)-\rho(t_\nu)\|_q>\delta.
\end{equation*}
Since $\rho^{\nu}(t_\nu)\to\rho(t)$ and also $\rho(t_\nu)\to\rho(t)$ in $L^q(B_R(0))$, we obtain $0\geq\delta$, a contradiction.
\\\\
\textbf{Step 6: Convergence in } $C([0,T],L^q(\mathbb{R}^3))$, $1\leq q <\infty$\\
Let us first note that
\begin{align*}
\|\rho^{\nu}(t)-\rho(t)\|_{L^q(\mathbb{R}^3)}^q\leq & \int_{B_r(0)}|\rho^{\nu}(t)-\rho(t)|^q dx\\
 &+2^{q-1}\int_{B_r^c(0)}|\rho^{\nu}(t)|^q dx+2^{q-1}\int_{B_r^c(0)}|\rho(t)|^q dx
\end{align*}
for all $r>0$.
For fixed $r>0$, the first term vanishes as $\nu\to0$, uniformly in $t$, thanks to Step 5. For the two remaining terms we proceed as in~\cite{Ciampa2021} and show that for any $\varepsilon>0$, we can find some radius $r>0$ such that 
\begin{align*}
\sup_{\nu}~\sup_t \left(\int_{B_r^c(0)}|\rho^{\nu}(t)|^q dx +\int_{B_r^c(0)}|\rho(t)|^q dx\right)<\varepsilon.
\end{align*}
For $1<R_1<\frac{1}{2}R_2$ consider a smooth cut-off function with the properties 
\begin{align*}
\psi_{R_1}^{R_2}(x)=\begin{cases}0,&0<|x|<R_1\\
1, &2R_1<|x|<R_2\\
0, &|x|>2R_2,
\end{cases}
\end{align*}
$0\leq\psi_{R_1}^{R_2}\leq 1$, and
\begin{align}\label{auxiliary lemma:estimates for derivatives of cut-off}
|\nabla_x \psi_{R_1}^{R_2}|\leq \begin{cases}\frac{C}{R_1}, \text{ if } |x|\leq 2R_1\\ \frac{D}{R_2} \text{ if } |x|> 2R_1
\end{cases},\quad|\nabla_x^2 \psi_{R_1}^{R_2}|\leq \begin{cases}\frac{C}{R_1^2}, \text{ if } |x|\leq 2R_1\\ \frac{D}{R_2^2} \text{ if } |x|> 2R_1
\end{cases}.
\end{align}
Then it follows that
\begin{align}\label{auxiliary lemma:further estimate for r derivative of cut-off}
\left|\frac{\partial_r \psi_{R_1}^{R_2}}{r}\right|\leq \left|\frac{|\nabla_x \psi_{R_1}^{R_2}|}{r}\right|\leq \begin{cases}\frac{C}{R_1^2}, \text{ if } |x|< 2R_1\\ \frac{D}{R_2^2} \text{ if } |x|\geq 2R_1.
\end{cases}
\end{align} 
We can perform a similar computation as in Lemma \ref{energy estimate} if we include the cut-off function in the formulation, i.e., we multiply (\ref{Navier Stokes for auxiliary lemma}) by ${q}|\rho^{\nu}|^{q-2}\rho^{\nu}\psi_{R_1}^{R_2}r$. As a consequence the advection term does not vanish anymore and we get 
\begin{align}\label{auxiliary lemma:energy estimate including the cut-off}
\int_{\mathbb{R}^3}|\rho^{\nu}(t)|^q\psi_{R_1}^{R_2} dx &\lesssim\int_{\mathbb{R}^3}|\rho^{\nu}_0|^q\psi_{R_1}^{R_2}dx +\int_0^T\int_{\mathbb{R}^3} |b^{\nu}| |\nabla_x \psi_{R_1}^{R_2}| |\rho^{\nu}|^q +\nu  |\rho^{\nu}|^q |\Delta_x \psi_{R_1}^{R_2}| dxdt\nonumber\\
&+\int_0^T\int_{\mathbb{H}}2\nu|\rho^{\nu}|^q r\left|\frac{\partial_r\psi_{R_1}^{R_2}}{r}\right|drdz+\int_{\mathbb{R}^3}|g^{\nu}||\rho^{\nu}|^{q-1}\psi_{R_1}^{R_2} dxdt.
\end{align}
Thanks to the a priori estimate~\eqref{auxiliary lemma:energy estimate for Navier Stokes equation in relative vorticity formulation} and the axisymmetry of $\rho^{\nu}$, we have
\begin{align}\label{auxiliary lemma:boundedness far away}
C\left(\|g^{\nu}\|_{L^1((0,T),L^q(\mathbb{R}^3))}^q\right.&\left.+\|\rho_{0}^{\nu}\|_{L^q(\mathbb{R}^3)}^q\right)\geq \|\rho^{\nu}(t)\|_{L^q(\mathbb{R}^3)}^q\\\nonumber
&\geq 2\pi\int_{-\infty}^{\infty}\int_{1}^{\infty}|\rho^{\nu}(t)|^q drdz=2\pi\|\rho^{\nu}(t)\|_{L^q(\mathbb{H}\cap \{r>1\})}^q
\end{align}
for any $q\in [1,\infty)$, and 
$\|\rho^{\nu}\|_{L^{\infty}(\mathbb{R}^3)}= \|\rho^{\nu}\|_{L^{\infty}(\mathbb{H})}.$
Then the axisymmetry of $b^{\nu}$ and the decay properties of the cut-off function (\ref{auxiliary lemma:estimates for derivatives of cut-off}), (\ref{auxiliary lemma:further estimate for r derivative of cut-off}) can be used to treat the term involving the transporting field:
\begin{align*}
\int_0^T\int_{\mathbb{R}^3} |b^{\nu}||\nabla_x \psi_{R_1}^{R_2}||\rho^{\nu}|^q dxdt &\lesssim \int_0^T\int_{\mathbb{R}^3\cap (B_{2R_1}(0)\setminus B_{R_1}(0))}  \frac{|b^{\nu}|}{R_1}|\rho^{\nu}|^q dxdt\\
&\quad\quad+\int_0^T\int_{\mathbb{R}^3\cap (B_{2R_2}(0)\setminus B_{R_2}(0))} \frac{|b^{\nu}|}{R_2}|\rho^{\nu}|^q dxdt\\
&\lesssim\int_0^T\int_{\mathbb{H}\cap \{R_1<r<2R_1\})}\frac{|b^{\nu}|}{R_1}|\rho^{\nu}|^q r drdzdt\\
&\quad\quad+\int_0^T\int_{\mathbb{H}\cap \{R_2<r<2R_2\}} \frac{|b^{\nu}|}{R_2}|\rho^{\nu}|^q r drdzdt\\
&\lesssim\int_0^T\int_{\mathbb{H}\cap \{R_1<r<2R_1\}}|b^{\nu}||\rho^{\nu}|^q drdzdt\\
&\quad\quad+\int_0^T\int_{\mathbb{H}\cap \{R_2<r<2R_2\}} |b^{\nu}||\rho^{\nu}|^q  drdzdt\\
&\lesssim \| b_1^{\nu}\|_{L^{\infty}((0,T),L^{s}(\mathbb{H}))}\|\rho^{\nu}\|_{L^{qs'}((0,T),L^{qs'}(\mathbb{H}\cap \{r>R_1\}))}^q\\
&\quad\quad+\|b_2^{\nu}\|_{L^{\infty}((0,T),L^{\infty}(\mathbb{H}))}\|\rho^{\nu}\|_{L^{q}((0,T),L^{q}(\mathbb{H}\cap \{r>R_1\}))}^q
\end{align*}
for any $s\in (1,2)$ such that $b_1^{\nu}$ is bounded in $L^{\infty}((0,T),L^s(\mathbb{H}))$, where $\frac{1}{s}+\frac{1}{s'}=1$.
%
Then, in view of (\ref{auxiliary lemma:boundedness of bnu}) and (\ref{auxiliary lemma:boundedness of gnu}) we may let $R_2\to \infty$ in (\ref{auxiliary lemma:energy estimate including the cut-off}) which leads to
\begin{align*}
\int_{B_{2R_1}^c(0)}|\rho^{\nu}(t)|^qdx &\lesssim \int_{B_{R_1}^c}|\rho^{\nu}_0|^qdx+\frac{\nu}{R_1^2}\|\rho^{\nu}\|^q_{L^{\infty}((0,T),L^q(\mathbb{R}^3))}\\
&\quad+ \| b_1^{\nu}\|_{L^{\infty}((0,T),L^{s}(\mathbb{H}))}\|\rho^{\nu}\|_{L^{qs'}((0,T),L^{qs'}(\mathbb{H}\cap \{r>R_1\}))}^q\\
&\quad+\|b_2^{\nu}\|_{L^{\infty}((0,T),L^{\infty}(\mathbb{H}))}\|\rho^{\nu}\|_{L^{q}((0,T),L^{q}(\mathbb{H}\cap \{r>R_1\}))}^q\\
&\quad+\|g^{\nu}\|_{L^1((0,T),L^q(B_{R_1}^c(0))}\|\rho^{\nu}\|^{q-1}_{L^{\infty}((0,T),L^q(B_{R_1}^c(0)))}\\
&\lesssim \|\rho_0^{\nu}\|^q_{L^q(B_{R_1}^c(0)}+\|\rho^{\nu}\|_{L^{qs'}((0,T),L^{qs'}(\mathbb{H}\cap \{r>R_1\}))}^q\\
&\quad+\|\rho^{\nu}\|_{L^{q}((0,T),L^{q}(\mathbb{H}\cap \{r>R_1\}))}^q+\frac{\nu}{R_1^2}\\
&\quad+\|g^{\nu}\|_{L^1((0,T),L^q(B_{R_1}^c(0)))}\left(\|\rho_0^{\nu}\|_{	L^q(\mathbb{R}^3)}+\|g^{\nu}\|_{L^1((0,T),L^q(\mathbb{R}^3))}\right)^{q-1}\\
&\lesssim \|\rho_0^{\nu}\|^q_{L^q(B_{R_1}^c(0))}+\|\rho^{\nu}\|_{L^{qs'}((0,T),L^{qs'}(\mathbb{H}\cap \{r>R_1\}))}^q\\
&\quad+\|\rho^{\nu}\|_{L^{q}((0,T),L^{q}(\mathbb{H}\cap \{r>R_1\}))}^q+\frac{\nu}{R_1^2}
+\|g^{\nu}\|_{L^1((0,T),L^q(B_{R_1}^c(0)))}.
\end{align*}
Now we may choose $R_1>0$ such that
\allowdisplaybreaks
\begin{align*} 
&\|\rho_0^{\nu}\|^q_{L^q(B_{R_1}^c(0))}\leq \frac{\varepsilon}{5},\\
&\|\rho^{\nu}\|_{L^{qs'}((0,T),L^{qs'}(\mathbb{H}\cap \{r>R_1\}))}^q\leq \frac{\varepsilon}{5},\\
&\|\rho^{\nu}\|_{L^{q}((0,T),L^{q}(\mathbb{H}\cap \{r>R_1\}))}^q\leq \frac{\varepsilon}{5},\\
&\|g^{\nu}\|_{L^1((0,T),L^q(B_{R_1}^c(0))}\leq \frac{\varepsilon}{5},\\
&\frac{\nu}{R_1^2}\leq \frac{\varepsilon}{5},
\end{align*}
which is possible thanks to the uniform bounds on $\rho_0^{\nu}$ and $g^{\nu}$ implied by~\eqref{auxiliary lemma:convergence of rhonu in L1}, \eqref{auxiliary lemma:weak convergence of rhonu in Linfty}, \eqref{auxiliary lemma:convergence of gnu}, and the convergence of $\rho^{\nu}$ to $\rho$ in $L^q((0,T),L^q(\mathbb{R}^3))$ for $1<q<\infty$ due to Step 2.
Since $\rho\in L^{\infty}((0,T),L^q(\mathbb{R}^3))$ we may also assume
\begin{align*}
\int_{B_{2R_1}^c(0)}|\rho(t)|^q dx \leq \varepsilon.
\end{align*}
\end{proof}
\begin{proof}[Proof of Theorem \ref{main theorem}]
$\xi^{\nu}\mathrel{\ensurestackMath{\stackon[1pt]{\rightharpoonup}{\scriptstyle\ast}}} \xi$ in $L^{\infty}((0,T),L^{\tilde{p}}(\mathbb{R}^3))$ for all $\tilde{p}\in (1,p]$.
Now, following the idea of~\cite{ConstDrivas}, we consider the two linearized problems
\begin{align}\label{relative vorticity formulation for forced NSE}
\begin{cases}
\partial_t \xi_n^{\nu} + u^{\nu}\cdot \nabla \xi_n^{\nu}=\nu \Delta \xi_n^{\nu}+ \frac{3\nu}{r}\partial_r \xi_n^{\nu}+g^{\nu}\ast \psi_n, \\
\xi_n^{\nu}(0,\cdot)=\xi_0^{\nu}*\psi_n,
\end{cases}
\end{align}
and
\begin{align}\label{relative vorticity formulation for forced Euler}
\begin{cases}
\partial_t \xi_n+ u\cdot \nabla \xi_n=g\ast \psi_n,\\
\xi_n(0,\cdot)=\xi_0*\psi_n,
\end{cases}
\end{align}
where $\psi_n$ is a standard mollifier. 
Recall that $u^{\nu}$ and $u$ are obtained from $\xi^{\nu}$ and $\xi$ through the Biot-Savart law. Note that we again used the cylindrical symmetry to give a formulation in the halfplane $\mathbb{H}$. By means of the triangle inequality, it then follows that
\begin{align}
\sup_{t\in [0,T]}\|\xi^{\nu}(t)-\xi(t)\|_{L^p(\mathbb{R}^3)}\leq &\sup_{t\in [0,T]} \| \xi^{\nu}(t)-\xi_n^{\nu}(t)\|_{L^p(\mathbb{R}^3)}
+\sup_{t\in [0,T]}\| \xi_n^{\nu}(t)-\xi_n(t)\|_{L^p(\mathbb{R}^3)}\notag\\
&+\sup_{t\in [0,T]}\| \xi_n(t)-\xi(t)\|_{L^p(\mathbb{R}^3)}.\label{estimate by triangle inequality}
\end{align}
By linearity, $\tilde{\xi}_n^{\nu}=\xi_n^{\nu}-\xi^{\nu}$ satisfies
\begin{align}\label{relative vorticity formulation for forced Navier Stokes difference}
\begin{cases}
\partial_t \tilde{\xi}_n^{\nu}+u^{\nu}\cdot \nabla \tilde{\xi}_n^{\nu}=\nu \Delta \tilde{\xi}_n^{\nu}+\frac{3\nu}{r}\tilde{\xi}_n^{\nu}+ \tilde{g}_n^{\nu},\\
\tilde{\xi}_n^{\nu}(0,\cdot)={\xi}_{n}^{\nu}(0,\cdot)-{\xi}_{0}^{\nu},
\end{cases}
\end{align}
where $\tilde{g}_n^{\nu}=g^{\nu}\ast \psi_n-g^{\nu}$. 
The $L^p$ estimate (\ref{energy estimate for Navier Stokes in relative vorticity formulation}) can be used once more in order to deduce that, for fixed $n$, the first term on the right-hand-side of~\eqref{estimate by triangle inequality} tends to zero uniformly in $\nu<\nu_0$, where $\nu_0=\nu_0(n)>0$ is sufficiently small:
\begin{align*}
\sup_{t\in [0,T]} \| \xi^{\nu}(t)-\xi_n^{\nu}(t)\|_{L^p(\mathbb{R}^3)}\leq C(p)\left(\| \xi^{\nu}_0-\xi_{n}^{\nu}(0,\cdot)\|_{L^p(\mathbb{R}^3)}+ \| g^{\nu}\ast \psi_n - g^{\nu}\|_{L^1((0,T),L^p(\mathbb{R}^3))}\right).
\end{align*}
In \cite[Lemma 1]{NussenzveigLopes2021} it was noticed that $\tilde{\xi}_n=\xi_n-\xi$ as a difference of renormalized solutions satisfies 
\begin{align}\label{relative vorticity formulation for forced Euler difference}
\begin{cases}
\partial_t \tilde{\xi}_n + u\cdot \nabla \tilde{\xi}_n = \tilde{g}_n\\
\tilde{\xi}_n(0,\cdot)=\xi_{n}(0,\cdot)-\xi_0
\end{cases}
\end{align}
in the renormalized sense, where $\tilde{g}_n=g\ast \psi_n - g$ and $\tilde{\xi}_{n,0}=\xi_{n,0}-\xi_0$. As a consequence, for fixed $n$, the third term of (\ref{estimate by triangle inequality}) converges to zero due to the estimate
\begin{align*}
\|{\xi}_n-\xi\|_{L^p(\mathbb{R}^3)}\leq C(p)\left(\|{\xi}_{n}(0,\cdot)-\xi_0\|_{L^p(\mathbb{R}^3)}+\|{g}\ast\psi_n-g\|_{L^1((0,T),L^p(\mathbb{R}^3))}\right).
\end{align*}
For the remaining term we apply Lemma \ref{auxiliary lemma}. 
Because of the regularization of the initial data we obtain $\xi_{n}^{\nu}(0,\cdot)\to \xi_{n}(0,\cdot)$ in $L^1(\mathbb{R}^3)$ and $\xi_{n}^{\nu_k}(0,\cdot)\mathrel{\ensurestackMath{\stackon[1pt]{\rightharpoonup}{\scriptstyle\ast}}}\xi_{n}(0,\cdot)$ in $L^{\infty}(\mathbb{R}^3)$ as $\nu \to 0$.
Indeed the latter weak$^{*}$ convergence is true for any fixed $n$ since
\begin{align*}
\|\xi_{n}^{\nu}(0,\cdot)\|_{L^{\infty}(\mathbb{R}^3)}=\|\xi_{0}^{\nu}\ast \psi_n\|_{L^{\infty}(\mathbb{R}^3)}\leq \|\xi_0^{\nu}\|_{L^p(\mathbb{R}^3)}\|\psi_n\|_{L^q(\mathbb{R}^3)}=C(n,p)\|\xi_0^{\nu}\|_{L^p(\mathbb{R}^3)}
\end{align*}
for $\frac{1}{p}+\frac{1}{q}=1$. Further, we decompose the axisymmetric Biot-Savart law as
\begin{align*}
u_1=(\mathds{1}_{B_1(0)}G)*\omega,\quad u_2=(\mathds{1}_{B_1(0)^c}G)*\omega,
\end{align*}
where it should be carefully noted that here, $B_1(0)$ denotes the unit ball in $\mathbb{R}^2$.

We get $u=u_1+ u_2\in L^{\infty}((0,T),L^1+L^{\infty}(\mathbb{H}))$ and $(u^{\nu})_{\nu}$ is bounded in $L^{\infty}((0,T),L^1+L^{\infty}(\mathbb{H}))$. Indeed, from~\cite[(2.11)]{sverakgallay} we know that the Biot-Savart kernel (expressed in cylindrical coordinates) satisfies $|G|\lesssim\frac{1}{|\cdot|}$. It then follows by Young's inequality that
\begin{align*}
\|u_1\|_{L^1(\mathbb{H})}+\|u_2\|_{L^{\infty}(\mathbb{H})}&\lesssim \left\|\mathds{1}_{B_1(0)}\frac{1}{|\cdot|}\ast|\omega|\right\|_{L^1(\mathbb{H})}+\left\|\mathds{1}_{B_1(0)^c}\frac{1}{|\cdot|}\ast|\omega|\right\|_{L^{\infty}(\mathbb{H})}\\
&\lesssim \|\omega\|_{L^1(\mathbb{H})}\\
&= \|\xi\|_{L^1(\mathbb{R}^3)}\lesssim \|\xi_0\|_{L^1(\mathbb{R}^3)}+\|g\|_{L^1((0,T),L^1(\mathbb{R}^3))}.\\
\end{align*}
For the same reason we have
\begin{align*}
\|u_1^{\nu}\|_{L^1(\mathbb{H})}+\|u_2^{\nu}\|_{L^{\infty}(\mathbb{H})}\leq \|\xi_0^{\nu}\|_{L^1(\mathbb{R}^3)}+\|g^{\nu}\|_{L^1((0,T),L^1(\mathbb{R}^3))}.
\end{align*}
Furthermore by Theorem \ref{Theorem of Nobili and Seis with forcing term} we have
\begin{align*}
u^{\nu}\to u \text{ in } L^2((0,T),L^2_{loc}(\mathbb{R}^3)).
\end{align*}
Eventually, by imposing (\cite[Lemma 4]{nobili2019renormalization}) it follows $u\in L^1((0,T),W^{1,1}_{loc}(\mathbb{R}^3))$.
Now, for any fixed $n$ all conditions of Lemma \ref{auxiliary lemma} are met. Hence, the second term of~\eqref{estimate by triangle inequality} becomes arbitrarily small if we choose $n$ sufficiently large and then $\nu=\nu(n)$ sufficiently small.
\end{proof}
\printbibliography
\end{document}